\documentclass[a4paper, 11pt]{amsart}
\usepackage{amsmath,amssymb,amscd}
\usepackage{mathtools}
\usepackage[colorlinks=true, allcolors=blue]{hyperref}
\usepackage{url}
\usepackage[latin1]{inputenc}
\usepackage{graphicx}
\usepackage{tikz-cd}
\usepackage{etoc}
\usepackage{enumerate}
\makeatletter
\newcommand{\pa}{P_{\text{analytic}}}
\newcommand{\pn}{P_{\text{naive}}}
\newcommand{\MapS}{\operatorname{Map}(S)}

\makeatother
\newtheorem{theorem}{Theorem}[section]

\newtheorem{proposition}[theorem]{Proposition}
\newtheorem{corollary}[theorem]{Corollary}
\newtheorem{lemma}[theorem]{Lemma}
\theoremstyle{definition}
\newtheorem{definition}[theorem]{Definition}

\newtheorem{remark}[theorem]{Remark}

\newtheorem{question}{Question}

\usepackage{color}

\title{Simplicity of action-based \texorpdfstring{$C^{*}$}{C*}-algebras from hyperbolic actions}
\author{Tianyi Lou}
\date{\today}
\address{Universit\'e C\^ote d'Azur, CNRS, LJAD (UMR CNRS 7351), Parc Valrose, 06108 Nice Cedex 2, France}
\address{Institut Fourier, UMR 5582, Laboratoire de Math\'ematiques, Universit\'e Grenoble Alpes, CS 40700, 38058 Grenoble cedex 9, France}
\email{tlou@unice.fr/tianyi.lou@univ-grenoble-alpes.fr}

\keywords{Property \(\pn\), Property \(\pa\), $C^{*}$-algebra.}

\begin{document}

\begin{abstract}
We study the simplicity of $C^{*}$-algebras built from group actions. For a faithful isometric action of a group $G$ on a countable metric space $X$, we use the associated action representation on $\ell^2(X)$ to define the action-based $C^{*}$-algebra $C^{*}_{X}G$. We define generalized versions of the properties $\pn$ and $\pa$
relative to the action and show that the naive form implies the analytic form. We also prove that the properties $\pa$ associated with a continuous action ensure the simplicity of the action-based $C^*$-algebra.
As an application, we deduce that big mapping class groups satisfy
the property $\pn^{\mathbb{X}}$ and the associated action-based $C^*$-algebra is simple.
\end{abstract}

\maketitle

\bigskip
{\small
\etocsettocstyle{\noindent\text{Contents}\par\medskip}{\par}
\localtableofcontents
}
\bigskip

\section{Introduction}

Let \(G\) be a countable discrete group. Denote by \(\ell^{2}(G)\) the Hilbert space of square-summable complex-valued functions on \(G\) and by \(\mathcal{B}\left(\ell^{2}(G)\right)\) the algebra of bounded operators on \(\ell^{2}(G)\). The group \(G\) acts on \(\ell^{2}(G)\) via the left regular representation
$$
\lambda(g) f(h)=f\left(g^{-1} h\right), \quad g, h \in G, f \in \ell^{2}(G).
$$
The reduced \(C^{*}\)-algebra \(C_{\mathrm{r}}^{*}(G)\) is the operator norm closure of the linear span of the
set of operators \(\{\lambda(g) \mid g \in G\}\) in \(\mathcal{B}\left(\ell^{2}(G)\right)\). 

We say that \(G\) is \(C^{*}\)-simple if its reduced \(C^{*}\)-algebra \(C_{r}^{*}(G)\) is simple, that is, it has no non-trivial two-sided ideals. A landmark result of Powers states that nonabelian free groups are \(C^{*}\)-simple \cite{PR75}. Later, many
other examples of \(C^{*}\)-simple groups were found. These include non-trivial free products \cite{WN79},
torsion-free non-elementary Gromov hyperbolic groups \cite{dlH85} (more generally, torsion-free
non-elementary convergence groups \cite{dlH07}), centerless mapping class groups and outer automorphism groups of free groups \cite{BdlH04}, irreducible Coxeter groups which are neither finite nor affine
\cite{F03}, relatively hyperbolic groups \cite{AM07a}, non-trivial groups without non-cyclic free subgroups \cite{OO14}, among others.  

In particular, Bekka, Cowling and De la Harpe introduced the property $\pn$ and the property $\pa$ for discrete groups \cite{BCD95}. The property $\pn$ implies the property $\pa$, then the latter guaranties that \(C_{\mathrm{r}}^{*}(G)\) is simple.  


The mapping class group of a finite type surface is discrete and satisfies property $\pn$, so its reduced \(C^{*}\)-algebra is simple. In the case of infinite type, the group $\MapS$ is not discrete, but we proved that \(\MapS\) satisfies the property \(\pn\) when $S$ contains a nondisplaceable subsurface of finite type. Naturally, we will ask the following questions: does the mapping class group \(\MapS\) of an infinite type surface $S$ that contains a nondisplaceable subsurface have the property \(\pa\)? How to define the reduced $C^{*}$-algebra of \(\MapS\)? Is the reduced $C^{*}$-algebra of \(\MapS\) simple?

Since $\MapS$ is neither locally compact nor compactly generated, classical arguments tied to the left regular representation do not apply directly to such a group. Indeed, even if $\MapS$ is a Polish group, there is, in general, no good notion of Haar measure. To connect the dynamics with operator algebraic consequences, we pass from the isometric action of a group $G$ on a countable metric space \(X\) endowed with the counting measure. This group $G$ preserving the counting measure generates a unitary representation in the Hilbert space $\ell^{2}(X)$ of square-summable complex-valued functions on $X$ as follows: 
\[\pi:G\longrightarrow \mathcal{U}(\ell^{2}(X)),\qquad (\pi(g)f)(x)=f(g^{-1} \cdot x).\]
We call it \emph{action representation}. In the case where $X$ is compact, it is precisely the Koopman representation.

We define the generalized property \(\pa^{X}\) (See Definition \ref{ana1}) relative to the associated action representation $\pi$ and the generalized property \(\pn^{X}\) (See Definition \ref{naive1}). The following proposition shows that the latter implies the former.

 \begin{proposition}[See Proposition \ref{P1}]
  Let $G$ be a group faithfully acting on a countable hyperbolic metric space $X$ by isometries and assume $G$ has the property $\pn^{X}$. Then $G$ satisfies the property \(\pa^X\).\end{proposition}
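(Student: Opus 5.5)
We will mimic the classical argument of Bekka, Cowling and de la Harpe \cite{BCD95}, which deduces $\pa$ from $\pn$, but with the left regular representation replaced by the action representation $\pi$ on $\ell^2(X)$. We take the definitions to be the natural analogues. Property $\pn^{X}$ says: for every finite set $F\subset G\setminus\{e\}$ there is $y\in G$ of infinite order such that, for each $f\in F$, the subgroup generated by $f$ and $y$ is free of rank two on $f,y$ (in particular acting freely and discretely on $X$). Property $\pa^{X}$ says: for every finite $F\subset G\setminus\{e\}$ and every $\varepsilon>0$ there are $g_1,\dots,g_N\in G$ such that
\[
\Bigl\| \frac1N\sum_{j=1}^N \pi(g_j f g_j^{-1})\Bigr\| <\varepsilon \quad\text{for all } f\in F.
\]

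The plan has four steps.

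\textbf{Step 1.} Given $F$ and $\varepsilon$, apply $\pn^{X}$ to obtain $y$. For each $f\in F$ the elements $f$ and $y$ generate a free group $H_f=\langle f,y\rangle$ that acts faithfully on $X$.

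\textbf{Step 2.} Set $g_j=y^{j}$ and consider the operators $\pi(y^{j} f y^{-j})$ for $j=1,\dots,N$. In a free group these elements $y^{j}fy^{-j}$ freely generate a free subgroup, since reduced words in them do not cancel. So it suffices to bound $\|\frac1N\sum_j \pi(h_j)\|$ when $h_1,\dots,h_N$ freely generate a free group.

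\textbf{Step 3.} Use Haagerup's inequality, or equivalently Akemann--Ostrand, for $\lambda_{F_N}$: the norm of $\frac1N\sum_{j=1}^N\lambda(h_j)$ is at most $2\sqrt{N-1}/N\to0$. To transfer this bound to $\pi$, we need $\pi|_{H}$ to be weakly contained in the regular representation of $H=\langle h_1,\dots,h_N\rangle$. This holds whenever $H$ acts on $X$ with amenable point stabilizers. Indeed, $\ell^2(X)$ then decomposes over $H$-orbits as $\bigoplus \ell^2(H/H_x)$, that is, as a sum of quasi-regular representations, and these are weakly contained in $\lambda_H$ exactly when the stabilizers $H_x$ are amenable.

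\textbf{Step 4 (main obstacle).} The remaining point is to control the stabilizers $H_x$, and this is where hyperbolicity of $X$ and the choice of $y$ enter. We would arrange that $y$ is loxodromic and that the $h_j$ play ping-pong on $\partial X$. Elements of $H$ are then either trivial or loxodromic, so no nontrivial element of $H$ fixes a point of $X$: a loxodromic isometry has positive translation length and fixes nothing. This makes every $H_x$ trivial and $\pi|_H$ a multiple of $\lambda_H$. We expect this to follow from the free and proper ping-pong behaviour that should be built into $\pn^{X}$. If the definition only gives freeness abstractly, the fallback is to replace $y$ by a high power $y^m$ and use the north--south dynamics of a loxodromic element on $\partial X$. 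This forces $\langle f, y^m\rangle$ to be a Schottky group whose nontrivial elements are all loxodromic, provided $f$ is not loxodromic with the same fixed points. Faithfulness of the action excludes that case through the freeness assumption.

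Combining the steps, for $N$ large we get $\|\frac1N\sum_j\pi(y^jfy^{-j})\|\le 2\sqrt{N-1}/N<\varepsilon$ uniformly over the finitely many $f\in F$, which is $\pa^{X}$.
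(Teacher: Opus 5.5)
\textbf{There is a genuine gap, in Step~4.} It comes from assuming a stronger $\pn^{X}$ than the paper's. Definition~\ref{naive1} only gives $\langle h,g\rangle\cong\langle h\rangle*\langle g\rangle$ with $g$ loxodromic and every elliptic subgroup of $\langle h,g\rangle$ conjugate into $\langle h\rangle$. So $h$ may have finite order, and it may itself fix points of $X$ (e.g.\ a Dehn twist about a curve in $K$ fixes a vertex of $\mathbb{X}$).

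\begin{itemize}
\item If $hz=z$, no power $g^m$ turns $\langle h,g^m\rangle$ into a Schottky group. Indeed $h$ belongs to it and is not loxodromic, and each conjugate $g^{-j}hg^{j}$ fixes $g^{-j}z$. So some stabilizers $H_x$ are nontrivial, your fallback is false, and Step~4's route to ``$\pi|_H$ is a multiple of $\lambda_H$'' collapses. Likewise ``in particular acting freely'' does not follow from abstract freeness of $\langle f,y\rangle$.
\item Step~2 also breaks for $h$ of finite order. The conjugates then generate a free product of finite cyclic groups, not a free group.
\item Definition~\ref{ana1} does not ask for small averages over arbitrary conjugators. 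It asks for one $g$ and one $C$ with $\|\sum_{j\ge1}a_j\pi(g^{-j}hg^{j})\|\le C\|a\|_{\ell^2}$ for all $a\in\ell^2(\mathbb{Z}^+)$. This implies your version, and it is what has to be proved.
\end{itemize}

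\textbf{The missing idea.} Step~3 only needs \emph{amenable} stabilizers, and the elliptic clause of $\pn^{X}$ supplies exactly that. With $H=\langle h,g\rangle$, each $\operatorname{Stab}_H(x)$ fixes a point, so it is an elliptic subgroup. Hence it is conjugate into the cyclic group $\langle h\rangle$, and so it is amenable. Therefore $\pi|_H\prec\lambda_H$, and for finitely supported $a$,
\[
\Bigl\|\sum_{j\ge1}a_j\,\pi(g^{-j}hg^{j})\Bigr\|\le\Bigl\|\sum_{j\ge1}a_j\,\lambda_H(g^{-j}hg^{j})\Bigr\|\le 2\|a\|_{\ell^2}.
\]
The last inequality is \cite[Lemma~2.2]{BCD95} for $\langle h\rangle*\langle g\rangle$, which is valid for $h$ of any order, unlike Haagerup's inequality for free generators. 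The bound extends to all $a\in\ell^2(\mathbb{Z}^+)$ because the partial sums are then norm-Cauchy.

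\textbf{Comparison with the paper.} Repaired this way, your argument is a genuinely different route. The paper reruns the BCD projection estimate directly on each orbit $H\cdot x_i\subset X$, using the elliptic clause (Lemma~\ref{free1}) to make the translates $W_j\cdot x_i$ pairwise disjoint. You instead isolate the only geometric input (amenable point stabilizers) and import the regular-representation estimate as a black box. Your version also avoids the orbit-representative bookkeeping. When $h$ is elliptic, the disjointness in Lemma~\ref{free1} holds only if each $x_i$ is chosen with $\operatorname{Stab}_H(x_i)\subseteq\langle h\rangle$. For instance, if $hz=z$ and $x_i=g^{-1}z$, then $hg\cdot x_i=g\cdot x_i$, a common point of $W_0\cdot x_i$ and $W_1\cdot x_i$.
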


The action representation of a group $G$ can be extended from the group to the group algebra by $\pi(\sum_{g \in G}a_{g}g)=\sum_{g \in G}a_{g}\pi(g)$. The action of $G$ on $X$ is faithful if and only if the associated action representation $\pi$ is faithful (see Lemma \ref{f}). The faithful group action induces a bounded representation $\pi$ of $\mathbb{C}G$ on $\ell^2(X)$ (see Lemma \ref{b}), so the reduced $C^*$-algebra of $G$ relative to $X$, called \emph{the action-based $C^*$-algebra} and denoted by $C^{*}_{X}G$, can be defined to be the norm closure of the linear span of the set of operators $\{\pi(g): g \in G\}$ in $\mathcal{B}(\ell^{2}(X))$.

\begin{theorem}[See Theorem \ref{thm:Pana-implies-simple-finite}]
Let \(G\) be a group faithfully acting on a countable hyperbolic metric space $X$ and let \(\pi: G \to \mathcal{U}(\ell^2(X))\) be the associated action representation. If \(G\) has property \(\pa^{X}\), then the \(C^{*}\)-algebra $C^{*}_{X}G$ is simple.

In particular, if $G$ has $\pn^{X}$, then the \(C^{*}\)-algebra $C^{*}_{X}G$ is simple.
\end{theorem}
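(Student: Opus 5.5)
The plan is to imitate the classical argument of Bekka--Cowling--de la Harpe, replacing the regular representation by the action representation $\pi$. The first task is to fix what $\pa^{X}$ must provide. By analogy with \cite{BCD95}, Definition \ref{ana1} should say the following. For every finite set $F\subset G\setminus\{e\}$ there are $g_1,\dots,g_n\in G$ such that
\[
\Bigl\|\frac1n\sum_{k=1}^n \pi(g_k)\pi(f)\pi(g_k)^{-1}\Bigr\|\le \varepsilon
\]
for every $f\in F$. The constant $\varepsilon<1$ is uniform, or the bound can be pushed to $0$ after iterating. In other words, the averaging operators $T_x=\frac1n\sum_k \pi(g_k)x\pi(g_k)^*$ shrink the non-identity part of any element of $\pi(\mathbb{C}G)$. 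I would also use Lemma \ref{f} and Lemma \ref{b}: $\pi$ is faithful, and $\pi$ extends boundedly to $\mathbb{C}G$. Together these give that $C^{*}_{X}G$ is the closure of $\pi(\mathbb{C}G)$.

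The key steps, in order, are these. (1) Construct a faithful tracial state, or at least a conditional expectation $\tau:C^{*}_{X}G\to\mathbb{C}$, with $\tau(\pi(g))=\delta_{g,e}$. This requires showing that $x\mapsto$ ``coefficient of $\pi(e)$'' is well defined and bounded on $\pi(\mathbb{C}G)$. I would obtain this from $\pa^{X}$ itself. If $x=a_e I+\sum_{f\in F}a_f\pi(f)$, then iterated averaging gives $T^{(m)}(x)\to a_e I$ in norm. Since each $T$ is a unital contraction, $|a_e|\le\|x\|$, so $\tau$ extends continuously. (2) Let $J\neq 0$ be a closed two-sided ideal and pick $0\neq y\in J$. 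Then $y^*y\in J$ is positive and nonzero. Faithfulness of $\tau$ gives $\tau(y^*y)>0$; I discuss below how to obtain faithfulness. (3) Approximate $y^*y$ within $\delta$ by $x\in\pi(\mathbb{C}G)$. Apply $\pa^{X}$ to the finite support of $x-\tau(x)I$ to get an averaging map $T$ with $\|T(x)-\tau(x)I\|<\delta$. Because $T$ is contractive, $\|T(y^*y)-\tau(y^*y)I\|<3\delta$. Moreover $T(y^*y)\in J$, since $J$ is invariant under conjugation by unitaries of $C^{*}_{X}G$ and under convex combinations. For small $\delta$, $T(y^*y)$ is therefore invertible, so $J$ contains an invertible element, hence $I$, and $J=C^{*}_{X}G$. (4) The final sentence of the theorem follows by combining this with Proposition \ref{P1}.

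The main obstacle is step (2), faithfulness of $\tau$. It is needed to guarantee $\tau(y^*y)>0$, and the action representation is not the regular one, so there is no cyclic trace vector $\delta_e$. My first attempt would avoid faithfulness entirely. Given a positive $z=y^*y\neq0$, it suffices to find some unitary conjugate $u z u^*$ with $\tau$-value bounded below, or more simply to use that $\tau$ is a state and $\tau(\pi(g)z\pi(g)^*)=\tau(z)$. If $\tau(z)=0$ for a positive $z$, then the averaging argument in (3) shows $0$ lies in the norm closure of convex combinations of conjugates of $z$. I would try to contradict this by evaluating on a vector $\delta_x\in\ell^2(X)$: the sum $\sum_x\langle \cdot\,\delta_x,\delta_x\rangle$ is the natural trace on $\mathcal{B}(\ell^2(X))$, and it is conjugation-invariant under permutation unitaries. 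Hence convex combinations of conjugates of $z$ keep a fixed diagonal mass. When $X$ is infinite this is not normalizable, so I would instead use a single diagonal coefficient $\langle z\delta_x,\delta_x\rangle>0$ together with local finiteness and hyperbolicity of the action to control the averages. If that fails, I would settle for $\tau$ being a state whose kernel contains no nonzero positive element of $\pi(\mathbb{C}G)^{+}$ and argue by density.
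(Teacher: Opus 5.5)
\textbf{The gap is step (2), and it cannot be filled.} Your steps (1), (3) and (4) are sound. Step (1) is in fact more careful than the paper, since averaging is exactly what makes the identity coefficient of an element of $\pi(\mathbb{C}G)$ well defined and bounded by its norm. One correction: Definition~\ref{ana1} is not the Powers-type condition you guessed, but the $\ell^2$-estimate along powers of a single $g$. It gives $\|\frac1J\sum_{j=1}^J\pi(g^{-j})T\pi(g^j)-a_eI\|\le CJ^{-1/2}\sum_h|a_h|$ directly, which is all you use.

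The faithfulness of $\tau$ in step (2) does not follow from the hypotheses of the statement, because it can fail. Since $\tau(\pi(g))=\delta_{g,e}$, the GNS representation of $\tau$, composed with $\pi$, is the left regular representation $\lambda$ of $G$ regarded as a discrete group. So there is a surjective $*$-homomorphism $\Phi\colon C^*_XG\to C^*_r(G)$ with $\Phi(\pi(g))=\lambda(g)$ and $\tau=\tau_\lambda\circ\Phi$. Hence $\tau$ is faithful iff $\Phi$ is injective iff $\pi$ is weakly contained in $\lambda$. Since $\pi$ is the direct sum of the quasi-regular representations on $\ell^2(G/\operatorname{Stab}_G(x_i))$, this holds iff every stabilizer $\operatorname{Stab}_G(x)$ is amenable.

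Suppose some $\operatorname{Stab}_G(x)$ is non-amenable. Take a finitely supported symmetric probability measure $\mu$ on it with $\|\lambda(\mu)\|<1$ (Kesten). Then $\pi(\mu)\delta_x=\delta_x$, so $\|\pi(\mu)\|=1$ and $\Phi$ is not isometric. Thus $\ker\Phi$ is a nonzero proper closed ideal, and $C^*_XG$ is not simple whatever averaging property holds.

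Neither $\pa^X$ nor $\pn^X$ restricts stabilizers; the elliptic condition in $\pn^X$ only concerns $\langle h,g\rangle$. For example, $F_2*F_2$ acting on the vertex set of its Bass--Serre tree satisfies $\pn^X$ by a ping-pong argument (edge stabilizers are trivial), yet its vertex stabilizers are copies of $F_2$. Likewise, for $\MapS$ acting on $\mathbb{X}$, the stabilizer of a vertex of $\mathcal{C}_S(K_1)$ contains all mapping classes supported in the infinite-type complement of $K_1$, hence free groups.

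Your fallbacks fail for the same reason. $\tau$ is faithful on $\pi(\mathbb{C}G)$, where $\tau(\pi(z)^*\pi(z))=\sum_h|z_h|^2$, but not on its closure. Let $f$ be continuous, vanishing on $[-\|\lambda(\mu)\|,\|\lambda(\mu)\|]$, with $f(1)=1$. Then $f(\pi(\mu))$ is a nonzero positive element of the proper ideal $\ker\Phi$, and $\tau(f(\pi(\mu)))=\tau_\lambda(f(\lambda(\mu)))=0$. So no density, diagonal-coefficient or $\operatorname{Tr}$ argument can rescue step (2).

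\textbf{The paper's proof has the same hole.} It avoids the trace as follows: approximate $x\in I$ by $T=\sum_{h\in A}a_h\pi(h)$ with $\|x-T\|<\varepsilon$, translate by $\pi(k^{-1})$ so that a nonzero $a_k$ becomes the identity coefficient, average, and then ``fix $\varepsilon<|a_k|/4$''. That choice is circular, since $a_k$ is a coefficient of $T$, which was chosen after $\varepsilon$. Your $\tau$ shows this is not a removable slip. Indeed $a_k=\tau(\pi(k^{-1})T)$, so $|a_k|\le\|T-x\|+|\tau(\pi(k^{-1})x)|$. For $x\in\ker\Phi$, every coefficient of every $\varepsilon$-approximant therefore has modulus below $\varepsilon$. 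You have isolated precisely the point where the paper's argument breaks.

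What can actually be proved is the statement with the extra hypothesis that all point stabilizers are amenable (equivalently, $\pi\prec\lambda$). Then $\Phi$ is an isomorphism, $\tau=\tau_\lambda\circ\Phi$ is faithful, and your steps (1)--(3) give a complete proof.
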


This approach is a natural first step, since it captures the main algebraic ideas without yet dealing with the topology of the ambient group.

This point of view is already sufficient for applications to big mapping class groups. Indeed, when \(S\) is a connected orientable infinite type surface of positive complexity containing a nondisplaceable subsurface of finite type, the hyperbolic space \(\mathbb{X}\) arising from the Horbez-Qing-Rafi construction provides a faithful action of \(\MapS\). We prove that \(\MapS\) has property \(\pn^{\mathbb{X}}\) using this action. 

\begin{theorem}[See Theorem \ref{TL1}]
Let $S$ be a connected orientable surface with positive complexity, and assume that $S$ contains a nondisplaceable connected subsurface of finite type. Then $\MapS$ has the property $P_{\text{naive}}^{\mathbb{X}}$.
\end{theorem}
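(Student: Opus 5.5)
We need to verify that \(\MapS\) satisfies the generalized property \(\pn^{\mathbb{X}}\) of Definition \ref{naive1}. Modelled on Bekka--Cowling--de la Harpe, we expect this to mean: for every finite subset \(F \subset \MapS \setminus \{1\}\) there is \(y \in \MapS\) of infinite order such that, for each \(f \in F\), the subgroup \(\langle f, y\rangle\) is canonically the free product \(\langle f\rangle * \langle y\rangle\). Nontriviality of \(f\) is measured through the faithful action on \(\mathbb{X}\).

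\textbf{Step 1: reduce to loxodromics.} In the Horbez--Qing--Rafi construction, \(\mathbb{X}\) is a Gromov hyperbolic graph built from curves meeting every translate of a fixed nondisplaceable finite-type subsurface \(K\). Its key features are:
\begin{itemize}
\item \(\MapS\) acts on \(\mathbb{X}\) isometrically and faithfully;
\item the action is nonelementary, so it contains independent loxodromics;
\item a mapping class supported on a translate of \(K\) and pseudo-Anosov there acts loxodromically with WWPD-type behaviour.
\end{itemize}
Fix such a loxodromic \(h\), with fixed points \(h^{\pm} \in \partial \mathbb{X}\).

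\textbf{Step 2: make the attracting/repelling pair avoid \(F\).} For each \(f \in F\), since \(f \neq 1\) and the action is faithful, \(f\) moves some point of \(\mathbb{X}\). We then want a conjugate \(y_0 = g h g^{-1}\) whose fixed pair \(\{g h^{\pm}\}\) is disjoint from its image under every \(f \in F\). That is, we need
\[
f\{g h^{\pm}\} \cap \{g h^{\pm}\} = \emptyset \quad \text{for all } f \in F.
\]
This is a finite avoidance problem. The orbit of \(h^{+}\) is dense in the limit set, and each nontrivial \(f\) fixes only a "small" set of boundary points. For an elliptic or parabolic \(f\), the point is that \(f\) cannot fix an open set of the limit set; showing this is where faithfulness enters. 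So we can choose \(g\) generically.

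If some \(f\) fixes the entire limit set (elliptic \(f\) acting trivially at infinity), this approach fails. In that case we plan to argue that \(f\) would be supported away from all translates of \(K\), which together with nondisplaceability forces \(f = 1\). Handling this case is the main obstacle. It requires the specific geometry of \(\mathbb{X}\) and a subsurface-projection argument: a nontrivial \(f\) moves some curve, and that curve can be chosen to intersect a translate of \(K\).

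\textbf{Step 3: ping-pong.} Take disjoint neighbourhoods \(U^{\pm}\) of \(g h^{\pm}\) with \(fU^{\pm}\) disjoint from \(U^{+} \cup U^{-}\) for every \(f \in F\); this is possible by Step 2. Replace \(y_0\) by \(y = y_0^{N}\) with \(N\) large. By north--south dynamics of loxodromics on \(\partial\mathbb{X}\), or on a quasi-tree shadow if the action is only WWPD, \(y^{\pm k}\) maps the complement of \(U^{\mp}\) into \(U^{\pm}\). The ping-pong lemma, applied in the version for a cyclic group \(\langle f\rangle\) of any order against \(\langle y\rangle\), gives \(\langle f, y\rangle \cong \langle f\rangle * \langle y\rangle\). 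This holds for all \(f \in F\) simultaneously, which establishes \(\pn^{\mathbb{X}}\).

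Since the reference version of \(\pn\) requires checking the free-product condition for each \(f\) separately, with a common \(y\), the uniform choice of \(N\) over the finite set \(F\) suffices.
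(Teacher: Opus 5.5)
Your proposal has genuine gaps, beginning with the target itself. Definition \ref{naive1} asks for a loxodromic $y$ such that, for each $f\in F$, $\langle f,y\rangle\cong\langle f\rangle*\langle y\rangle$ \emph{and} the only elliptic subgroups of $\langle f,y\rangle$ are conjugate into $\langle f\rangle$. You never address the last clause, and it does not follow from the free-product decomposition. By Kurosh, you would have to show that every element of $\langle f\rangle*\langle y\rangle$ not conjugate into a factor (for example a cyclically reduced word $f^{a_1}y^{b_1}\cdots f^{a_r}y^{b_r}$) acts loxodromically on $\mathbb{X}$. That needs an argument beyond ping-pong on $\partial\mathbb{X}$; a parabolic isometry, for instance, can also map a boundary neighbourhood strictly into itself. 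This clause is exactly what Lemma \ref{free1} uses later, and the paper obtains it from \cite[Proposition 3.5]{TL24}.

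The construction of $y$ is also incomplete.
\begin{itemize}
\item In Step 3 you only arrange that $f$ itself moves $U^{+}\cup U^{-}$ off itself. The ping-pong lemma for $\langle f\rangle*\langle y\rangle$ needs this for every nontrivial power $f^{k}$. If $f$ has order $4$ and cycles $gh^{+}\mapsto q\mapsto gh^{-}\mapsto r\mapsto gh^{+}$, your condition $f\{gh^{\pm}\}\cap\{gh^{\pm}\}=\emptyset$ holds, yet $f^{2}$ swaps $gh^{\pm}$.
\item For $f$ of infinite order (Dehn twists about curves in $K$ are elliptic on $\mathbb{X}$, for instance), you face infinitely many conditions, so this is not a ``finite avoidance problem''. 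Pointwise avoidance also gives no uniform neighbourhoods if $\{f^{k}gh^{\pm}\}$ accumulates at $gh^{\pm}$.
\item Step 2 is only asserted. You need one pair moved by all $f\in F$ at once. The open sets of limit points moved by the different $f$ have no reason to intersect unless you prove, say, that each is dense, and that is precisely the case you leave open.
\end{itemize}
The paper uses no genericity argument. For each $h_i\in F$ it uses faithfulness to find a translate $f_{h_i}(K)$ meeting $\operatorname{Supp}(h_i)$, and takes a partial pseudo-Anosov $g_{f_{h_i}}$ there. This element is WWPD loxodromic on $\mathbb{X}$ by \cite[Theorem~2.9]{HQR20}, and $h_i$ moves its attracting point by \cite[Lemma~3.3]{TL24}. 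The paper then glues these into one loxodromic $g_{f_{h_1}}^{m}g_{f_{h_2}}^{m}$ via Lemma \ref{product}, with induction on $|F|$, so that no $h_i$ stabilizes $\{g^{+},g^{-}\}$. Finally it delegates the ping-pong against the whole cyclic group $\langle h_i\rangle$ to \cite[Theorem~1]{TL24}.
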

Combining Proposition \ref{P1} and Theorem \ref{thm:Pana-implies-simple-finite}, the corresponding action-based \(C^*\)-algebra \(C_{\mathbb{X}}^{*}\MapS\) is algebraically simple.

\begin{corollary}[See Corollary \ref{MapSsimple}]
Let \(S\) be a connected orientable surface with positive complexity, and assume that
\(S\) contains a nondisplaceable connected subsurface of finite type, then the action-based $C^{*}$-algebra $C^{*}_{\mathbb{X}}\MapS$ is simple.   
\end{corollary}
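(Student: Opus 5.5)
The corollary follows by chaining the three results stated above. First, I will check that the hypotheses of Theorem \ref{TL1} are met. The surface $S$ is connected, orientable, of positive complexity, and contains a nondisplaceable connected subsurface of finite type. So Theorem \ref{TL1} gives that $\MapS$ has property $\pn^{\mathbb{X}}$, where $\mathbb{X}$ is the hyperbolic space from the Horbez--Qing--Rafi construction.

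Next, to apply Proposition \ref{P1} and Theorem \ref{thm:Pana-implies-simple-finite}, I need the standing hypotheses on the action: $\mathbb{X}$ must be a countable hyperbolic metric space, and $\MapS$ must act on it faithfully by isometries. Hyperbolicity and the isometric action are part of the Horbez--Qing--Rafi construction. Faithfulness is asserted in the introduction: the construction ``provides a faithful action of $\MapS$.''

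Countability is the point I expect to need real care. I would argue it from the combinatorial nature of $\mathbb{X}$. Its vertices are built from isotopy classes of curves, or of finite type subsurfaces in the $\MapS$-orbit of the chosen nondisplaceable subsurface, together with finitely many auxiliary data. On any surface with countably generated fundamental group there are only countably many such isotopy classes. If $\mathbb{X}$ is instead a geodesic graph, I would replace it by its vertex set with the induced metric. This keeps it quasi-isometric, hence hyperbolic, and it is still acted on isometrically. Faithfulness on the vertex set follows because an isometry of a graph fixing every vertex is trivial. I will check that the definition of $\pn^{\mathbb{X}}$ is insensitive to this replacement, or simply take $\mathbb{X}$ to be the vertex set from the start.

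With these hypotheses in place, Proposition \ref{P1} upgrades $\pn^{\mathbb{X}}$ to $\pa^{\mathbb{X}}$. Lemmas \ref{f} and \ref{b} then ensure that the action representation $\pi$ on $\ell^2(\mathbb{X})$ is faithful and bounded on $\mathbb{C}\MapS$, so $C^{*}_{\mathbb{X}}\MapS$ is well defined. Finally, Theorem \ref{thm:Pana-implies-simple-finite}, or directly its ``in particular'' clause applied to $\pn^{\mathbb{X}}$, yields that $C^{*}_{\mathbb{X}}\MapS$ is simple.

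The only potential obstacle is thus the bookkeeping about countability and faithfulness of the action on the space $\mathbb{X}$ used in Theorem \ref{TL1}. Everything else is a direct citation.
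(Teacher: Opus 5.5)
Your chain (Theorem \ref{TL1}, then Proposition \ref{P1}, then Theorem \ref{thm:Pana-implies-simple-finite}) is exactly the paper's argument, and the paper handles countability and faithfulness the same way you do (vertex set, Lemma \ref{a}). But the obstacle you single out is not the real one. \textbf{The step that fails is the final citation.} In this setting property $\pa^{\mathbb{X}}$ does not give simplicity; it obstructs it. Since $\|M_J(T)\|\le\|T\|$, Lemma \ref{lem:averaging-finite-sum} gives $|a_e|\le\|\sum_h a_h\pi(h)\|$. Hence $\tau(\sum_h a_h\pi(h))=a_e$ is a well-defined tracial state on $C^*_{\mathbb{X}}\MapS$. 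Its GNS representation $\rho_\tau$ satisfies $\rho_\tau\circ\pi\cong\lambda$, the left regular representation of $\MapS$ viewed as a discrete group. The kernel $N_\tau=\{y:\tau(y^*y)=0\}$ of $\rho_\tau$ is a closed two-sided ideal not containing $Id$. It is nonzero. For infinite-type $S$, $S\setminus K_1$ has an infinite-type component, so a vertex $(c,K_1)$ is fixed by $s=T_\alpha$ and $t=T_\beta$ for curves $\alpha,\beta\subset S\setminus K_1$ with $i(\alpha,\beta)\ge 2$, and $\langle s,t\rangle\cong F_2$. For $x=\frac14(s+s^{-1}+t+t^{-1})$ we get $\pi(x)\delta_{(c,K_1)}=\delta_{(c,K_1)}$, so $\|\pi(x)\|=1$, whereas $\|\lambda(x)\|=\sqrt3/2$ by Kesten. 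Thus $\rho_\tau$ is not isometric, hence not injective. Concretely, $f(\pi(x))$ is a nonzero element of $N_\tau$ for any continuous $f$ vanishing on $[-\sqrt3/2,\sqrt3/2]$ with $f(1)=1$. So if $\MapS$ has $\pa^{\mathbb{X}}$, as Corollary \ref{C1} asserts, then $C^*_{\mathbb{X}}\MapS$ is \emph{not} simple. If it does not have $\pa^{\mathbb{X}}$, the first two links of your chain fail. Either way the argument cannot prove the statement.

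The concrete flaw in the proof of Theorem \ref{thm:Pana-implies-simple-finite} is a circular choice of $\varepsilon$. The approximant $T$, and hence $a_k$, is chosen after $\varepsilon$, and then the proof ``fixes $\varepsilon<|a_k|/4$''. For $x\in N_\tau$ this is impossible. By Cauchy--Schwarz, $\tau$ vanishes on the ideal $N_\tau$, so for every $k$ we have $|a_k|=|\tau(\pi(k^{-1})T)|=|\tau(\pi(k^{-1})(T-x))|\le\|T-x\|<\varepsilon$. In Powers' argument for $C^*_r(G)$ this step is repaired by faithfulness of the canonical trace, via $\tau(x^*x)=\|x\delta_e\|^2>0$. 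Here faithfulness of $\tau$ is equivalent to $\pi$ being weakly contained in $\lambda$, which forces every point stabilizer to be amenable. That fails for the action on $\mathbb{X}$, so no bookkeeping about countability or faithfulness can close this gap.
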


However, this approach does not capture the topology of groups such as big mapping class groups, which are naturally viewed as Polish groups rather than discrete ones. In this setting, finite subsets are too weak to describe the large-scale geometry of the group, and therefore do not suffice for a satisfactory topological version of the simplicity result. For this reason, we turn to Rosendal's notion of coarsely bounded subsets, which serve as the bounded pieces of a Polish group from the viewpoint of large-scale geometry.

Motivated by this, we introduce coarsely bounded versions of the properties \(\pn^X\) and \(\pa^X\), denoted \(\pn^{X,\text{CB}}\)(See Definition \ref{naive}) and \(\pa^{X,\text{CB}}\)(See Definition \ref{ana}). We also show that the property \(\pn^{X, CB}\) implies the property \(\pa^{X, CB}\) (see Proposition \ref{P2}). Naturally, we want to ask the following question.

\begin{question}
    Is there a larger $C^*$-algebra than the action-based \(C^{*}\)-algebra $C^{*}_{X}G$ for which the property \(\pa^{X,\text{CB}}\) ensure the simplicity?
\end{question}

\subsection*{Acknowledgments:}

I am deeply grateful to my advisors, Indira Chatterji and Fran\c{c}ois Dahmani, for their guidance, encouragement, and for many illuminating discussions and insightful comments. Thanks to the China Scholarship Council for their fellowship support.

\section{Priliminary}

\subsection{Operators on Hilbert space} Let us now recall more details about the operators on Hilbert space \(\mathcal{H}\).

The inner product of two vectors \(\xi, \eta\) in a Hilbert space \(\mathcal{H}\) is denoted by \(\langle\xi, \eta\rangle\). We denote by \(\mathcal{L}\left(\mathcal{H}_{1}, \mathcal{H}_{2}\right)\) the vector space of all continuous linear operators from a Hilbert space \(\mathcal{H}_{1}\) to another Hilbert space \(\mathcal{H}_{2} \) and an operator \(T \in \mathcal{L}\left(\mathcal{H}_{1}, \mathcal{H}_{2}\right)\) has an adjoint \(T^{*} \in \mathcal{L}\left(\mathcal{H}_{2}, \mathcal{H}_{1}\right)\). We write \(\mathcal{L}(\mathcal{H})\) for \(\mathcal{L}(\mathcal{H}, \mathcal{H})\) and observe that it is naturally an involutive complex algebra with unit operator \(I\) of \(\mathcal{H}\). An operator \(U: \mathcal{H} \rightarrow \mathcal{H}\) is \emph{unitary} if $$ U U^{*}=U^{*} U=I $$ or, equivalently, if \(\langle U \xi, U \eta\rangle=\langle\xi, \eta\rangle\) for all \(\xi, \eta \in \mathcal{H}\) and if \(U\) is onto. The unitary group \(\mathcal{U}(\mathcal{H})\) of \(\mathcal{H}\) is the group of all unitary operators in \(\mathcal{L}(\mathcal{H})\).

A linear map $T:\mathcal{H}_{1}\to\mathcal{H}_{2}$ is called \emph{bounded} if and only if there exists some $M>0$ such that for all $x \in \mathcal{H}_{1}$,
\[
\|Tx\|_{\mathcal{H}_{2}} \leq M\|x\|_{\mathcal{H}_{1}}.
\]
The smallest such $M$ is called the \emph{operator norm} of $T$ and denoted by $\|T\|$. A linear operator between normed spaces is continuous if and only if it is bounded. We denote by
\[
\mathcal{B}(\mathcal{H}_{1},\mathcal{H}_{2})=\{T:\mathcal{H}_{1}\to\mathcal{H}_{2}\mid T \text{ is linear and bounded}\}
\]
the Banach space of all bounded operators, equipped with the operator norm $\|\cdot\|$. When $\mathcal{H}_{1}=\mathcal{H}_{2}=\mathcal{H}$ we write $\mathcal{B}(\mathcal{H})$. 
We denote by $\mathcal{O}(\mathcal{H})$ the space of all operators from $\mathcal{H}$ to itself.

\subsection{Definitions} The Koopman framework recasts dynamical systems as operator-theoretic objects. Instead of tracking how a group $G$ moves points of a measure space $(X, \mu)$, the Koopman viewpoint follows how the action pulls back square-summable observables. This turns the nonlinear dynamics of the action of $G$ on $X$ into a linear, unitary representation on the Hilbert space $L^{2}(X, \mu)$.

\begin{definition}[\cite{TBMN15}]
 Let $G$ be a group acting measure-preservingly on a countable metric space $(X, \mu)$ and $\ell^{2}(X)$ be a Hilbert space. For each $g\in G$ define $$ \left(\pi(g) f\right)(x):=f\left(g^{-1} \cdot x\right), \quad f \in \ell^{2}(X). $$ Because the action preserves \(\mu\), every \(\pi(g)\) is unitary, and the map $$ \pi: G \longrightarrow \mathcal{U}(\ell^{2}(X)), \quad g \mapsto \pi(g), $$ is a group homomorphism. This homomorphism \(\pi\) is called the \emph{action representation} associated with the action of $G$ on $X$. In the case where $X$ is compact, it is precisely the Koopman representation.\end{definition}

The action representation lifts this group action to the Hilbert space $\ell^{2}(X)$. The representation \(\pi\) maps \(g\) to a unitary operator on \(\ell^{2}(X)\) because for all \(\xi, \eta \in \ell^{2}(X)\), one has
\[\|\pi(g) \xi\|^{2}=\sum_{x \in X}\left|\xi\left(g^{-1} x\right)\right|^{2}=\sum_{y \in X}|\xi(y)|^{2}=\|\xi\|^{2},\] and 
\[\langle\pi(g) \xi, \pi(g) \eta\rangle=\langle\xi, \eta\rangle.\]

\begin{lemma}\label{f}
Let $G$ be a group acting on a countable metric space $X$ by isometries and let $\pi: G \rightarrow \mathcal{U}(\ell^2(X))$ be the associated action representation. Then the action of $G$ on $X$ is faithful if and only if $\pi$ is faithful.
\end{lemma}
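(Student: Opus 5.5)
Both directions will be shown directly, using only the definition $(\pi(g)f)(x)=f(g^{-1}\cdot x)$ and the fact that $\pi$ is a group homomorphism. Faithfulness of $\pi$ means its kernel is trivial, i.e.\ $\pi(g)=I$ implies $g=e$. Faithfulness of the action means that if $g\cdot x=x$ for all $x\in X$, then $g=e$. The bridge between the two is the family of indicator functions $\delta_x\in\ell^2(X)$, $x\in X$. These lie in $\ell^2(X)$ because we use the counting measure, and they form an orthonormal basis.

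\emph{Key computation.} For $g\in G$ and $x\in X$ we have $(\pi(g)\delta_x)(y)=\delta_x(g^{-1}y)$. This equals $1$ exactly when $g^{-1}y=x$, i.e.\ when $y=g\cdot x$, so $\pi(g)\delta_x=\delta_{g\cdot x}$. Thus $\pi(g)$ permutes the basis $\{\delta_x\}$ according to the action of $g$.

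\emph{Action faithful $\Rightarrow$ $\pi$ faithful.} Suppose $\pi(g)=I$. Then $\delta_{g\cdot x}=\pi(g)\delta_x=\delta_x$ for every $x$. Distinct points give distinct (indeed orthogonal) indicators, so $g\cdot x=x$ for all $x$, and faithfulness of the action gives $g=e$. Since $\pi$ is a homomorphism, a trivial kernel is equivalent to injectivity.

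\emph{$\pi$ faithful $\Rightarrow$ action faithful.} Suppose $g\cdot x=x$ for all $x$. Then $g^{-1}\cdot x=x$ as well, so $(\pi(g)f)(x)=f(x)$ for every $f\in\ell^2(X)$. Hence $\pi(g)=I$, and injectivity of $\pi$ forces $g=e$.

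There is no real obstacle here. The only point requiring care is to be precise about what "faithful" means for $\pi$: it means injectivity of the homomorphism $G\to\mathcal U(\ell^2(X))$, and not faithfulness of the extension to $\mathbb CG$. The isometric and metric structure plays no role; the argument needs only that the action preserves the counting measure, which holds for any bijection.
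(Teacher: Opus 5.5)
Your proof is correct and is essentially the paper's argument. The paper proves each direction by contraposition, but it uses exactly your two observations: $\pi(g)=\mathrm{Id}$ when $g$ fixes every point, and $\pi(g)\delta_{x}=\delta_{g\cdot x}$, which recovers $g\cdot x=x$ from $\pi(g)=\mathrm{Id}$.
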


\begin{proof}
($\Rightarrow$) If the action of $G$ on $X$ is not faithful, there exists $g\neq e$ with $g\cdot x=x$ for all $x\in X$. Then for every $\xi\in \ell^2(X)$ and $x\in X$,
\[
(\pi(g)\,\xi)(x)=\xi(g^{-1}\!\cdot x)=\xi(x),
\]
hence $\pi(g)=\mathrm{Id}$ and $\pi$ is not faithful.

($\Leftarrow$) Conversely, suppose $\pi$ is not faithful, so there exists $g\neq e$ with $\pi(g)=\mathrm{Id}$. Fix any $x_0\in X$ and consider the Dirac vector $\delta_{x_0}\in \ell^2(X)$ satisfying $\delta_{x_{0}}(x)=1$ if $x=x_{0}$, else $0$. Then
\[
\pi(g)\,\delta_{x_0}=\delta_{g\!\cdot x_0}.
\]
Since $\pi(g)=\mathrm{Id}$, we have $\delta_{g\!\cdot x_0}=\delta_{x_0}$, hence $g\cdot x_0=x_0$. As $x_0$ was arbitrary, $g$ fixes every point of $X$, so the action is not faithful.
\end{proof}

\subsection{Group algebra of \texorpdfstring{$G$}{G}}
Let us recall the relationship between the space of operators and group algebra. The group algebra of $G$, denoted by \(\mathbb{C} G\), is defined to be the set of all formal finite sums \(\sum_{g\in G} a_{g} g\), where \(a_{g} \in \mathbb{C}\) and \(a_{g} =0\) with finitely many exceptions, together with the rules of addition and multiplication
$$\left(\sum_{g\in G} a_{g} g\right)+ \left(\sum_{g\in G} a_{g}^{\prime} g\right)=\sum_{g\in G} (a_{g}+a_{g}^{\prime})g$$
and 
$$\left(\sum_{g\in G} a_{g} g\right)\left(\sum_{g\in G} a_{g}^{\prime} g\right)=\sum_{g\in G}\left(\sum_{hk=g}a_{h}a_{k}^{\prime}\right)g.$$

The action representation of $G$ can be extended from the group to the group algebra by 
$$
\pi:\mathbb{C} G \rightarrow \mathcal{O}(\ell^{2}(X)),  \pi\left(\sum a_{g} g\right)=\sum a_{g} \pi(g).
$$

The next lemma shows that this extension indeed lands in the space of bounded operators.

\begin{lemma} \label{b} Let $G$ be a group faithfully acting on a countable metric space $X$ by isometries and let $\pi: G \rightarrow \mathcal{U}(\ell^2(X))$ be the associated action representation. Then the representation $\pi$ of $\mathbb{C} G$ on $\ell^{2}(X)$ is by bounded operators, i.e. $\pi: \mathbb{C} G \rightarrow \mathcal{B}(\ell^{2}(X))$.  
\end{lemma}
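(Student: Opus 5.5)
The plan is to use the fact, recorded just before Lemma \ref{f}, that each $\pi(g)$ is a unitary operator on $\ell^2(X)$, hence bounded with $\|\pi(g)\|=1$. Boundedness of $\pi(x)$ for a general element $x=\sum_{g\in G}a_g g\in\mathbb{C}G$ then reduces to the triangle inequality, because the sum has only finitely many nonzero terms.

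First I will check that $\pi(g)$ is well defined and linear on $\ell^2(X)$. Since the action is by bijections of $X$ (the action of $g^{-1}$ inverts that of $g$), the reindexing $y=g^{-1}x$ gives $\|\pi(g)\xi\|=\|\xi\|$, as computed in the preliminaries. Hence $\pi(g)\in\mathcal{B}(\ell^2(X))$ with norm exactly $1$, since $X$ is nonempty. Linearity of $\xi\mapsto \xi(g^{-1}\cdot)$ is immediate.

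Next, for $x=\sum a_g g$ with finite support $F\subset G$, I will observe that $\pi(x)=\sum_{g\in F}a_g\pi(g)$ is a finite linear combination of bounded operators. It is therefore linear, and for every $\xi\in\ell^2(X)$,
\[
\|\pi(x)\xi\|\le\sum_{g\in F}|a_g|\,\|\pi(g)\xi\|=\Big(\sum_{g\in F}|a_g|\Big)\|\xi\|.
\]
This gives $\|\pi(x)\|\le\|x\|_1$, so $\pi(x)\in\mathcal{B}(\ell^2(X))$.

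Finally, I will note that $\pi$ is linear and multiplicative on $\mathbb{C}G$, because $\pi(g)\pi(h)=\pi(gh)$ extends bilinearly to the convolution product; this makes $\pi$ an algebra homomorphism into $\mathcal{B}(\ell^2(X))$. I do not expect a genuine obstacle. The only point needing care is that the finiteness of the support of elements of $\mathbb{C}G$ is what makes the triangle-inequality bound legitimate. The faithfulness hypothesis is not needed for boundedness; it only guarantees, via Lemma \ref{f}, that $\pi$ is injective on $G$.
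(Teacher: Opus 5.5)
Your proposal is correct and follows essentially the same route as the paper: each $\pi(g)$ is an isometry of norm $1$ by reindexing the sum over $X$, and the triangle inequality on the finite sum gives $\|\pi(\sum a_g g)\|\le\sum|a_g|$. Your side remarks are also accurate: only bijectivity of the action is used, not the isometry or faithfulness hypotheses, and $\pi$ is multiplicative on $\mathbb{C}G$.
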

\begin{proof}
    For any $\xi \in \ell^{2}(X)$, by the isometric action of \(G\) on \(X\), we have
    $$\|\pi(g) \xi\|^{2}=\sum_{x \in X}\left|\xi\left(g^{-1} x\right)\right|^{2}=\sum_{y \in X}|\xi(y)|^{2}=\|\xi\|^{2}.$$
    Thus, 
    $$\|\pi(g) \|_{\text{operator}}=\operatorname{sup}_{\|\xi\|=1}\|\pi(g) \xi\|=\operatorname{sup}_{\|\xi\|=1}\| \xi\|=1.$$
    For any $\sum a_{g} g \in \mathbb{C} G$, we have
    $$\left\|\pi\left(\sum a_{g} g \right) \right\|_{\text{operator}} \leq \sum |a_{g}|\|\pi(g)\|_{\text{operator}}=\sum |a_{g}|.$$
    Therefore, $\pi$ is a bounded operator.



\end{proof}

\subsection{Action-based \texorpdfstring{$C^{*}$}{C*}-algebra}
Let $G$ be a group acting on a countable metric space $X$. Assume that the action is faithful.
The \emph{action-based $C^{*}$-algebra} of $G$ relative to $X$, denoted by \(C_{X}^{*} G\), is the norm closure of the linear span of the set of operators $\{\pi(g): g \in G\}$ in $\mathcal{B}(\ell^{2}(X))$, namely
$$
C_{X}^{*} G=\overline{\operatorname{Span}\{\pi(g):g \in G\}}{ }^{\|\cdot\|_{\text { operator }}},
$$
where $\|\cdot\|_{\text { operator }}$ denotes the operator norm on $\mathcal{B}\left(\ell^{2}(X)\right)$, given by 
$$\|T\|_{\text { operator }}=\operatorname{sup}\left\{\|T \xi\| | \xi \in \ell^{2}(X), \|\xi\|=1 \right\},$$
where $\|\xi\|=\sqrt{\sum_{x \in X}|\xi(x)|^{2}}$ is the norm in $\ell^{2}(X)$.

\begin{lemma}\label{unit}
 The algebra $C^*_{X}G$ is a $C^{*}$-algebra with unit.   
\end{lemma}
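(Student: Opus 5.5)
The plan is to realize $C^*_X G$ as the norm closure of a unital $*$-subalgebra of $\mathcal{B}(\ell^2(X))$, and then use the standard fact that the norm closure of a $*$-subalgebra of a $C^*$-algebra is again a $C^*$-algebra. The ambient algebra $\mathcal{B}(\ell^2(X))$ is a $C^*$-algebra: it is a Banach algebra under the operator norm, and its involution is the Hilbert space adjoint, which satisfies $\|T^*T\|=\|T\|^2$. So everything reduces to checking that the dense part, namely the image $A:=\pi(\mathbb{C}G)=\operatorname{Span}\{\pi(g):g\in G\}$, is a unital $*$-subalgebra. By Lemma \ref{b}, $A$ does sit inside $\mathcal{B}(\ell^2(X))$.

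\textbf{Algebraic closure of $A$.} I will verify the three properties in turn.
\begin{enumerate}[(i)]
\item $A$ is closed under multiplication. Since $\pi$ is a group homomorphism, $\pi(g)\pi(h)=\pi(gh)$. Bilinearity of composition then gives that the product of two finite linear combinations is again a finite linear combination. In other words, $\pi$ is multiplicative on $\mathbb{C}G$ for the convolution product defined above.
\item $A$ is closed under the adjoint. Each $\pi(g)$ is unitary, so $\pi(g)^*=\pi(g)^{-1}=\pi(g^{-1})$. Hence $\bigl(\sum a_g\pi(g)\bigr)^*=\sum \overline{a_g}\,\pi(g^{-1})\in A$.
\item $A$ contains the unit. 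We have $\pi(e)=I$, since $(\pi(e)f)(x)=f(x)$.
\end{enumerate}

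\textbf{Passing to the closure.} Put $B:=\overline{A}$, the operator-norm closure. I will check the following.
\begin{itemize}
\item $B$ is a closed subspace of the Banach space $\mathcal{B}(\ell^2(X))$, hence complete.
\item $B$ is closed under multiplication. If $S_n\to S$ and $T_n\to T$ with $S_n,T_n\in A$, then
\[
\|S_nT_n-ST\|\le\|S_n\|\,\|T_n-T\|+\|S_n-S\|\,\|T\|\to 0,
\]
using that $\|S_n\|$ is bounded.
\item $B$ is closed under the adjoint, because $\|T^*\|=\|T\|$ makes the adjoint an isometry, hence continuous.
\item The $C^*$-identity $\|T^*T\|=\|T\|^2$ holds on $B$, since it holds in $\mathcal{B}(\ell^2(X))$.
\item $I=\pi(e)\in A\subseteq B$, so $B$ is unital.
\end{itemize}

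There is no real obstacle here. The only point needing care is the continuity of multiplication in the closure, which is the short estimate displayed above. Faithfulness of the action plays no role in this lemma; it is only part of the standing assumptions.
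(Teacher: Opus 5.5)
Your proof is correct and uses the same idea as the paper, which simply observes that $\pi(e)$ is the identity operator and therefore lies in the norm closure defining $C^*_X G$. You also verify what the paper leaves implicit: that $\operatorname{Span}\{\pi(g):g\in G\}$ is a $*$-subalgebra (via $\pi(g)\pi(h)=\pi(gh)$ and $\pi(g)^*=\pi(g^{-1})$), and hence that its norm closure in $\mathcal{B}(\ell^2(X))$ is a $C^*$-algebra, so your write-up is the more complete of the two.
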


\begin{proof}
    Since $\mathbb{C}G$ is unital (with the unit being the formal sum $1\cdot e$ where $e$ is the identity of $G$) and the action representation is a group homeomorphism, we have
    $$\pi(e)\xi(x)=\xi(e^{-1}x)=\xi(x),$$
    for every $\xi \in \ell^2(X)$. Thus, $\pi(e)$ is the identity operator on $\ell^2(X)$, which is contained in the closure defining $C^*_{X}G$. Hence, $C^*_{X}G$ is a $C^{*}$-algebra with unit.  
\end{proof}


\section{The generalized properties for the groups acting on a countable metric space}

\subsection{Generalized analytic property \texorpdfstring{\(\pa^{X}\)}{P{analytic}{X}}}

Building on Definition 1 in \cite{BCD95}, we think the following notion is natural to consider.

\begin{definition}\label{ana1}
Let $G$ be a group acting on a countable metric space $X$ by isometries and let $\pi: G \rightarrow \mathcal{U}(\ell^2(X))$ be the associated action representation. We say that \(G\) satisfies \emph{property \(\pa^X\)} if for every finite subset \(F \subset G \setminus \{1\}\), there exist an element \(g \in G\) and a constant \(C > 0\) such that for any \(h \in F\) and any $a \in \ell^2(\mathbb{Z}^+)$, we have:
\[
\left\| \sum_{j=1}^{\infty} a_j \pi(g^{-j} h g^j) \right\|_{\mathrm{op}} \leq C \left\| a \right\|_{\ell^{2}}.
\]
Here $a_{j}$ is the $j$th-term of the sequence $a$ and $\mathbb{Z}^{+}$ denote the set of positive integers.
\end{definition}

\begin{remark}
\begin{itemize}
\item It is important that Definition~\ref{ana1} is formulated relative to the action
representation \(\pi\), and not with respect to all unitary representations of \(G\).
Indeed, the universal version would already fail for the trivial representation
\(1_G\), since in that case
\[
\left\|\sum_{j=1}^{\infty} a_j\,1_G(g^{-j}hg^j)\right\|
=
\left|\sum_{j=1}^{\infty} a_j\right|,
\]
which is not bounded by \(C\|a\|_{\ell^2}\) for all \(a\in \ell^2(\mathbb Z_+)\). Also, a formulation involving all unitary representations would already
exclude every nontrivial finite group. Indeed, if \(G\) is finite and \(g\in G\),
then \(g\) has finite order, say \(g^m=e\). Hence
\[
g^{-(j+m)}hg^{j+m}=g^{-j}hg^j
\qquad\text{for all } j\ge 1,
\]
so the sequence of conjugates \(g^{-j}hg^j\) is periodic. Applying any unitary
representation \(\rho\), the sequence $\rho(g^{-j}hg^j)$ is also periodic. Now choose coefficients by
\[
a_{1+km}=\frac1{\sqrt N}\quad (0\le k\le N-1),\qquad a_j=0\text{ otherwise}.
\]
Then \(\|a\|_{\ell^2}=1\), while
\[
\sum_{j\ge 1} a_j\,\rho(g^{-j}hg^j)
=
\sum_{k=0}^{N-1}\frac1{\sqrt N}\,\rho(g^{-1}hg)
=
\sqrt N\,\rho(g^{-1}hg).
\]
Since \(\rho(g^{-1}hg)\) is unitary, its operator norm is equal to \(1\). Thus
\[
\left\|\sum_{j\ge 1} a_j\,\rho(g^{-j}hg^j)\right\|_{\mathrm{op}}
=
\sqrt N.
\]
As \(N\) is arbitrary, no constant \(C>0\) can satisfy the required estimate.

\item Whenever $G$ is countable, we can take the left translation action of $G$ on itself and put any left-invariant metric on $G$. If we set $X=G$ and $G$ acts on it by left multiplication, then the action representation $\pi$ on $\ell^{2}(G)$ equals the left-regular representation $\lambda$ of $G$. Therefore, \(\pa^X=\pa\) (see \cite{BCD95}).
    \item If $X=\{x\}$, only $G=\{e\}$ satisfies $\pa^{X}$.
\end{itemize}
\end{remark}

\begin{definition}\label{naive1}
    A group $G$ acting on a countable hyperbolic metric space $X$ is said to have \emph{property $\pn^X$} if, for any finite subset $F$ of $G \setminus \{1\}$, there exists an element \(g \in G\) which is loxodomic for the action of $G$ on $X$ such that, for every \(h \in F\), the subgroup \(\left\langle h, g\right\rangle\) of \(G\) is isomorphic to the free product \(\langle h\rangle *\left\langle g\right\rangle\) and the only elliptic subgroups of \(\left\langle h, g\right\rangle\) are conjugated into \(\langle h\rangle\).
\end{definition}

The following proposition is a generalization of \cite[Lemma 2.2]{BCD95}, that is, if a group $G$ has the property $\pn^X$, then it has the property \(\pa^X\). 
\begin{proposition} \label{P1}
Let $G$ be a group faithfully acting on a countable hyperbolic metric space $X$ by isometries and assume $G$ has the property $\pn^{X}$. Then $G$ satisfies the property \(\pa^X\).
\end{proposition}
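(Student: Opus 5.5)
The plan is to adapt the proof of \cite[Lemma 2.2]{BCD95}: prove a ping-pong estimate with a constant independent of the orbit, then sum over orbits. Fix a finite set \(F\subset G\setminus\{1\}\) and let \(g\) be the loxodromic element given by property \(\pn^X\) (Definition \ref{naive1}). Fix \(h\in F\) and write \(H=\langle h,g\rangle\cong\langle h\rangle*\langle g\rangle\) and \(x_j=g^{-j}hg^{j}\). I aim to prove
\[
\Bigl\|\sum_{j\ge1}a_j\pi(x_j)\Bigr\|_{\mathrm{op}}\le 2\|a\|_{\ell^2},
\]
so that \(C=2\) works for every \(h\in F\) at once.

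\textbf{Step 1 (reduction to one orbit).} The space \(X\) is a disjoint union of \(H\)-orbits, so \(\ell^2(X)=\bigoplus_{Hx}\ell^2(Hx)\). Each \(\pi(x_j)\) preserves every summand. It therefore suffices to prove the bound on each \(\ell^2(Hx)\) with a constant independent of the orbit. Identify \(Hx\) with \(H/K\), where \(K=\mathrm{Stab}_H(x)\). The subgroup \(K\) has a bounded orbit, namely the single point \(x\), so it is elliptic. By \(\pn^X\) it is therefore conjugate into \(\langle h\rangle\): \(K\subset y\langle h\rangle y^{-1}\) for some \(y\in H\).

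\textbf{Step 2 (ping-pong sets via the Bass--Serre tree).} Let \(T\) be the Bass--Serre tree of \(\langle h\rangle*\langle g\rangle\), and let \(v_h\) be the vertex with stabilizer \(\langle h\rangle\). The map \(\varphi: H/K\to V(T)\), \(wK\mapsto wy\,v_h\), is well defined and \(H\)-equivariant. For each \(j\), consider the vertex \(g^{j}v_h\). Let \(\Omega_j\subset H/K\) be the set of cosets whose image under \(\varphi\) lies in the component of \(T\setminus\{g^jv_h\}\) not containing \(v_h\) (or equals \(g^jv_h\)), in the ``\(g^jh^{\pm}\)-branch''. I will choose the precise branches so that:
\begin{enumerate}[(i)]
\item the sets \(g^{-j}\Omega_j\)-type pieces are arranged so that the sets \(\Omega_j\) are pairwise disjoint;
\item \(x_j\) maps \(H/K\setminus\Omega_j\) into \(\Omega_j\).
\end{enumerate}
Property (ii) is normal-form ping-pong: a point outside the \(g^j\)-branch is sent by \(g^j\) away from \(v_h\), then by \(h\ne1\) into a different branch at \(v_h\), then by \(g^{-j}\) into the \(g^{-j}h\)-branch. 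I expect to use the sets \(\Omega_j\) for the ranges and the sets \(\Omega'_j=x_j^{-1}\)-preimages for the domains, as in \cite{BCD95}.

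\textbf{Step 3 (estimate).} Let \(P_j\) be the projection onto \(\ell^2(\Omega_j)\), and split
\[
\pi(x_j)=\pi(x_j)P_j+\pi(x_j)(1-P_j).
\]
By (ii), the operators \(\pi(x_j)(1-P_j)\) have ranges in \(\ell^2(\Omega_j)\), which are pairwise orthogonal by (i). The operators \(\pi(x_j)P_j\) have pairwise orthogonal initial spaces. For partial isometries \(A_j\) of norm at most \(1\) with pairwise orthogonal ranges, or with pairwise orthogonal initial spaces, Cauchy--Schwarz gives \(\|\sum a_jA_j\|\le\|a\|_{\ell^2}\). In the second case one applies this to the adjoints. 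Summing the two pieces gives the constant \(2\). Convergence of the infinite sum in operator norm comes along with the same estimate applied to tails.

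\textbf{Main obstacle.} The hard part is Step 2. In the regular case the coset space is \(H\) itself and reduced words give the partition directly. Here the orbits are \(H/K\) with \(K\) possibly a nontrivial subgroup of a conjugate of \(\langle h\rangle\), so normal forms are ambiguous. I must check that the branch sets are well defined (this is the purpose of the equivariant map \(\varphi\)) and that disjointness (i) still holds when \(h\) has finite order or \(K=y\langle h\rangle y^{-1}\) collapses a whole star of the tree. If the vertex sets fail to be disjoint, I would instead work with directed edges of \(T\) based at \(g^jv_h\), and accept a slightly worse but still uniform constant.
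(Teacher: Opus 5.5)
Your Steps 1 and 3 match the paper's estimate. The paper also decomposes $\ell^2(X)$ over the $H$-orbits and, after conjugating, splits $\pi(g^{-j}hg^j)$ against the indicator of a ping-pong set. There that set is $W_{-j}\cdot x_i$, where $W_0$ is the set of reduced words not beginning with a nontrivial power of $g$ and $W_j=g^jW_0$. This gives $C=2$.

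The real difference is how the ping-pong sets are built on orbits with nontrivial stabilizers. The paper picks arbitrary base points $x_i$. In Lemma~\ref{free1} it argues that the $W_j\cdot x_i$ are pairwise disjoint because $v^{-1}g^{j-k}u$ ($u,v\in W_0$) is never conjugate into $\langle h\rangle$. That is false as stated. Take $u=e$ and $v=hg^{j-k}$: this gives $g^{k-j}h^{-1}g^{j-k}$. So if $hz=z$ and $x_i=g^{k-j}z$, then $g^kz\in W_j\cdot x_i\cap W_k\cdot x_i$. The base points must therefore be chosen suitably, e.g.\ with $\mathrm{Stab}_H(x_i)\subseteq\langle h\rangle$.

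Your conjugating element $y$ and the equivariant map $\varphi$ perform exactly this normalization without choosing base points. With the sets below, $\Omega_j$ is precisely $W_{-j}\cdot(y^{-1}x)$. So the paper's route is more elementary (normal forms only), while yours makes well-definedness and disjointness automatic and handles arbitrary stabilizers correctly.

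Step 2 itself, however, is wrong as written, by a sign. The element $x_j=g^{-j}hg^j$ fixes the vertex $g^{-j}v_h$, not $g^jv_h$. With your $\Omega_j$, the (nonempty) fibre of $\varphi$ over $g^{-j}v_h$ lies outside $\Omega_j$ and is mapped into itself by $x_j$, so (ii) fails.

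The correct choice is as follows. Let $v_g$ be the vertex of the tree $T$ with stabilizer $\langle g\rangle$ adjacent to $v_h$. Let $C_m$ be the component of $T\setminus\{v_g\}$ containing $g^mv_h$, and put $\Omega_j:=\varphi^{-1}(C_{-j})$. Note that $C_{-j}$ is $g^{-j}v_h$ together with \emph{all} branches at $g^{-j}v_h$ not containing $v_h$, not a single ``$g^{-j}h$-branch''. This matters because $x_j$ fixes $g^{-j}v_h$ and permutes the branches there.

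For (ii): if $p\notin C_{-j}$, the geodesic from $v_h$ to $g^jp$ begins with the edge $[v_h,v_g]$. Since $h\neq 1$ sends this edge to a different edge at $v_h$, we get $hg^jp\in C_0$, hence $x_jp\in C_{-j}$. Equivariance of $\varphi$ transfers this to $H/K$.

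For (i): the $C_m$ are distinct components for distinct $m$, because $g$ is loxodromic and hence of infinite order. Preimages of disjoint sets are disjoint whether or not $\varphi$ is injective.

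So your ``main obstacle'' is not one. Finite order of $h$, or $K$ being a full conjugate of $\langle h\rangle$, plays no role, and the fallback to directed edges is unnecessary.
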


We now fix the following notation and use it throughout this subsection. Assume $G$ has the property $\pn^{X}$. Fix a finite set $F\subset G\setminus\{e\}$ and choose $g\in G$ loxodromic for the action of $G$ on $X$ given by $\pn^{X}$ so that for each $h\in F$,
\[
H:=\langle h,g\rangle\ \cong\ \langle h\rangle * \langle g\rangle .
\]
We work with the normal form on $H$. Let $W_0\subset H$ be the set of reduced words whose first syllable lies in $\langle h\rangle$ (equivalently: whose first letter is not a nontrivial power of $g$), and for $j\in\mathbb Z$ put $W_j:=g^{\,j}W_0$.

Before proving this proposition, we first prove the following lemma with the above setup.

\begin{lemma}\label{free1}
Let $G$ be a group faithfully acting on a countable hyperbolic metric space $X$ by isometries and assume $G$ has the property $\pn^{X}$. 
Then there exists a countable set $\{x_i\}_{i\ge0}\subset X$ such that, for each $x_i$, $0\leq i< \infty$, and $j\neq k$,
\[(W_j\cdot x_i)\ \cap\ (W_k\cdot x_i)=\varnothing.\]
\end{lemma}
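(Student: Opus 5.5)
We aim for a ping-pong argument on orbits of a single point, in the spirit of the free-group case of \cite[Lemma 2.2]{BCD95}. The sets \(W_j=g^jW_0\) partition \(H\), since every reduced word begins with a unique maximal power \(g^j\) (with \(j=0\) when the first syllable lies in \(\langle h\rangle\)). The required disjointness is therefore equivalent to
\[
w\cdot x_i \neq w'\cdot x_i \qquad \text{whenever } w\in W_j,\ w'\in W_k,\ j\neq k .
\]
Writing \(u=w^{-1}w'\), this says that no element \(u\) of the form \(w^{-1}w'\), with \(w,w'\) in different \(W\)'s, lies in the stabilizer \(\operatorname{Stab}_H(x_i)\).

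The first step is to show that each point stabilizer \(\operatorname{Stab}_H(x)\) is elliptic, which is immediate since it fixes \(x\). By \(\pn^X\) it is therefore conjugate into \(\langle h\rangle\), say \(\operatorname{Stab}_H(x)\subset v\langle h\rangle v^{-1}\) for some \(v\in H\). Hence every nontrivial element of the stabilizer has the reduced form \(v h^m v^{-1}\), a conjugate of a power of \(h\).

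The second step is a normal-form computation. Take \(w=g^j w_0\) and \(w'=g^k w_0'\) with \(w_0,w_0'\in W_0\) and \(j\neq k\). Then
\[
u = w_0^{-1} g^{k-j} w_0' .
\]
Since \(w_0\) and \(w_0'\) begin with \(\langle h\rangle\)-syllables or are trivial, this expression is already reduced, with the middle syllable \(g^{k-j}\neq 1\) surviving. We must then check when such a \(u\) can equal \(vh^mv^{-1}\). In a free product, a reduced conjugate of \(h^m\) is palindromic-shaped: it reads \(v\,h^m\,v^{-1}\), and its central syllable lies in \(\langle h\rangle\). For \(u\), the central syllable is \(g^{k-j}\) exactly when \(w_0\) and \(w_0'\) have the same length. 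In general, though, the symmetric structure fails: \(w_0^{-1}\) and \(w_0'\) are not inverse to each other around a \(g\)-syllable, and this forces a contradiction unless the \(g\)-syllable is the central one. In that case it would have to be a power of \(h\), which is impossible. The main obstacle is exactly this case analysis. Conjugates \(vh^mv^{-1}\) can contain \(g\)-syllables inside \(v\), so one must argue carefully by comparing lengths and positions. The cleanest route is to use the cyclically reduced length: \(u\) is conjugate to a power of \(h\) if and only if its cyclic reduction is a single \(\langle h\rangle\)-syllable. Cyclically reducing \(w_0^{-1}g^{k-j}w_0'\) cancels matching outer syllables, and we need to show that a nontrivial \(g\)-power always survives. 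I expect the cyclic reduction to stall in such a way that a \(g\)-syllable survives.

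If this pure free-product argument leaks in some edge case, the fallback is to use the loxodromic nature of \(g\). We would choose \(x_0\) on or near a quasi-axis of \(g\), far from the fixed-point-like regions of finitely many elements, so that \(g^{n}\) moves \(x_0\) by roughly \(n\tau\), where \(\tau>0\) is the translation length. We then run ping-pong in \(X\) to separate orbits with different initial \(g\)-exponents. To obtain a countable family \(\{x_i\}\), we take all points of \(X\) satisfying the condition, or simply the translates \(x_i=g^{i}x_0\), since \(W_j\cdot g^ix_0\) relate via \(W_j g^i\) and the disjointness transfers. The statement only needs such a family to exist, so the core is producing one good point.
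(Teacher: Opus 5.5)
There is a genuine gap, and it sits exactly at the step you flag as the main obstacle: the claim you are trying to prove there is false. An element $u=w_0^{-1}g^{n}w_0'$ with $n\neq 0$ and $w_0,w_0'\in W_0$ can be conjugate into $\langle h\rangle$. Take $w_0=e$ and $w_0'=hg^{-n}\in W_0$. Then $u=g^{n}hg^{-n}$, whose cyclic reduction is $h$, so no $g$-syllable survives.

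This is not a bookkeeping issue: the disjointness itself fails for badly chosen points. Suppose $y\in X$ satisfies $hy=y$, which $\pn^X$ certainly allows for elliptic $h$, and put $x=gy$. Then $x=e\cdot x\in W_0\cdot x$. Also $x=(ghg^{-1})\cdot x\in W_1\cdot x$, since $ghg^{-1}=g\cdot(hg^{-1})$ with $hg^{-1}\in W_0$. So no argument that works for an arbitrary base point can succeed. Equivalently, none can work for an arbitrary conjugator $v$ in $\operatorname{Stab}_H(x)\subset v\langle h\rangle v^{-1}$. The paper's written proof makes the same inference (``contains a non-trivial power of $g$, hence is not conjugate into $\langle h\rangle$'') with arbitrarily chosen orbit representatives, so it has the same defect.

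The missing idea is to use the freedom in choosing the points. Since $\operatorname{Stab}_H(x)$ is elliptic, $\pn^X$ gives $\operatorname{Stab}_H(x)\subset v\langle h\rangle v^{-1}$. Hence $\operatorname{Stab}_H(v^{-1}x)=v^{-1}\operatorname{Stab}_H(x)v\subset\langle h\rangle$, so in each $H$-orbit you may pick a representative $x_i$ with $\operatorname{Stab}_H(x_i)\subset\langle h\rangle$. Then $w\cdot x_i=w'\cdot x_i$ forces $w'\in w\langle h\rangle$.

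Right multiplication by $h^m$ only modifies the last syllable of a reduced word: it merges with, deletes, or appends an $\langle h\rangle$-syllable. So the first syllable can change only when $w\in\langle h\rangle$, and then both $w,w'\in W_0$. Hence $W_j\langle h\rangle=W_j$, and $W_j\cdot x_i\cap W_k\cdot x_i=\varnothing$ for $j\neq k$. Choosing one such representative per orbit also gives the decomposition $X=\bigsqcup_i H\cdot x_i$ that Proposition~\ref{P1} actually needs.

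Your fallback does not repair this. The translates $g^ix_0$ all lie in a single $H$-orbit, so they cannot give that decomposition. Disjointness also does not transfer to them, because $\operatorname{Stab}_H(g^ix_0)=g^i\operatorname{Stab}_H(x_0)g^{-i}$, and right multiplication by $g^i$ does not preserve the $W_j$. Indeed, if $x_0$ is a good point fixed by $h$, then $gx_0$ is precisely the bad point of the counterexample above. Finally, placing $x_0$ near an axis of $g$ does not by itself control its stabilizer.
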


\begin{proof}
Fix $h \in F$ and let $H:=\langle h\rangle * \langle g\rangle$ as above. Choose a base point $x_{0}\in X$, we write the orbit $O_{0}=H \cdot x_{0}=\{w \cdot x_{0} | w \in H\} \subset X$. Observe that for each $w\in H$, $w(O_{0}) \subset O_{0}$ and $w(X\setminus O_{0}) \subset X\setminus O_{0}$. Then we choose $x_{1}\in X\setminus O_{0}$ and write the orbit $O_{1}=H \cdot x_{1}=\{w \cdot x_{1} | w \in H\} \subset X$. We also have that $w(O_{1}) \subset O_{1}$ and $w(X\setminus (O_{0}\cup O_{1})) \subset X\setminus (O_{0}\cup O_{1})$. By induction, we have that $X=\bigcup_{i=0}^{\infty}O_{i}=\bigcup_{i=0}^{\infty}H\cdot x_{i}$, where $\{x_0, x_1, \cdots\}$ is a countable set.

Fix $i$ and set $K_i:=\mathrm{Stab}_H(x_i)$. Suppose \((W_j\cdot x_i)\ \cap\ (W_k\cdot x_i)\neq \varnothing\), then
 there are $j\neq k$ and $u,v\in W_0$ with $(g^j u)\cdot x_i=(g^k v)\cdot x_i$.
Then $v^{-1}g^{\,j-k}u\in K_i$.
Because the first letters of $u$ and $v$ are outside $\langle g\rangle$, the reduced form of $v^{-1}g^{\,j-k}u$ (with $j\neq k$) necessarily contains a non-trivial power of $g$, hence is not conjugate into $\langle h\rangle$. This contradicts the notion that the elliptic subgroups of $H$ must be conjugated to $\langle h\rangle$.
Therefore, for each $i$, the sets $W_j\cdot x_i$, for $j \in \mathbb{Z}$, are pairwise disjoint.
\end{proof}

Then, we give the proof of Proposition \ref{P1} following \cite[Lemma 2.2]{BCD95}.
\begin{proof}[Proof of Proposition \ref{P1}.]
Let \(F = \{h_1, \dots, h_n\} \subset G \setminus \{1\}\) be a finite subset. By Definition \ref{naive1}, there exists a loxodromic element \(g \in G\) such that for all \(i\), the subgroup \(\langle h_i, g \rangle \cong \langle h_i \rangle * \langle g \rangle\), that is, a free product.

Fix $h$ in $F$, let $H$ to be the subgroup $\langle h, g \rangle$ of $G$. Denote by $W_{0}$ the subset of  $H$ consisting of the words which do not begin with a nontrivial power of $g$ and by $W_{j}$ the set $g^{j}W_{0}$, for all $j \in \mathbb{Z}$. By Lemma \ref{free1}, there exists 
a countable base $\{x_0, x_1, \cdots\}$ such that, for each $x_i$, $0\leq i <\infty$, the sets $W_j^{i}\cdot x_i$, for ${j\in \mathbb{Z}}$, are pairwise disjoint. We also write $O_{i}=H \cdot x_{i}=\{w \cdot x_{i} | w \in H\} \subset X$. So we can decompose the Hilbert space $\ell^{2}(X)=\bigoplus_{i}\ell^{2}(
O_i)$. 

Denote $\pi$ by the action representation of $H$ and define $\chi_{W_{j}^i}: \ell^{2}(O_i) \rightarrow \ell^{2}(O_i)$ to be a bounded projection operator on $\ell^{2}(O_i)$ by the indicator $W_{j}^i\cdot x_{i}$, for all $x \in O_i$, that is, 
$$\chi_{W_{j}^i}(\xi^{i})(x)=\begin{cases}
    \xi^{i}(x), & \mbox{if}~x \in W_{j}^i \cdot x_{i},\\
    0, & \mbox{otherwise}.
\end{cases}$$
(For all $x\in O_i$ and $\xi^i \in \ell^2(O_i)$, 
$$\|\chi_{W_{j}^i}\xi^i\|^{2}=\sum_{x \in O_i}|\chi_{W_{j}^i}\xi^i(x)|^{2}\leq \sum_{x \in O_i}|\xi^i(x)|^{2}=\|\xi^i\|^{2},$$ so it is bounded. Moreover $\chi_{W_{0}}^{2}=\chi_{W_{0}}$, so it is a projection.
)

For $\xi, \eta \in \ell^2(X)$, one has $\xi=\sum_{i}\xi^{i}$ and $\eta=\sum_{i}\eta^i$. 
For $j \in \mathbb{Z}$, since $\pi(gh)=\pi(g)\pi(h)$ and $\pi(g^{-j})\pi(g^{j})=I$, we obtain
\begin{align*}
    |\langle\pi(g^{-j}hg^{j})\xi, \eta \rangle|&=\left|\sum_{i}\langle\pi(g^{-j}hg^{j})\xi^i, \eta^i  \rangle\right|=\left|\sum_{i}\langle\pi(hg^{j})\xi^i, \pi(g^{j})\eta^i \rangle\right|\\& =\left|\sum_i\langle\pi(h)(\chi_{W_{0}^i}\pi(g^{j})\xi^i+\chi_{H\setminus W_{0}^i}\pi(g^{j})\xi^i), \pi(g^{j})\eta^i \rangle\right|\\
    &\leq \left|\sum_i\langle\pi(h)\chi_{W_{0}^i}\pi(g^{j})\xi^i, \pi(g^{j})\eta^i \rangle\right|\\& \quad +\left|\sum_i\langle\pi(h)\chi_{H\setminus W_{0}^i}\pi(g^{j})\xi^i, \pi(g^{j})\eta^i \rangle\right|
\end{align*}

Observe that, for all $x \in O_i$, $\chi_{W_{\alpha}^i}\pi(g^{j})\xi^i(x)=\pi(g^{j})\chi_{W_{\alpha-j}^i}\xi^i(x)$. Then, using the Cauchy-Schwarz inequality and $h(H\setminus W_{0}^i)\subset W_{0}^i$, we have
\begin{align*}
    |\langle\pi(g^{-j}hg^{j})\xi, \eta \rangle|&\leq \left|\sum_i\langle\pi(h)\pi(g^{j})\chi_{W_{-j}^i}\xi^i, \pi(g^{j})\eta^i \rangle\right|\\ & \quad +\left|\sum_i\langle\pi(h)\chi_{H\setminus W_{0}^i}\pi(g^{j})\xi^i, \chi_{W_{0}^i}\pi(g^{j})\eta^i \rangle\right|\\
    & \leq \left\|\sum_i\chi_{W_{-j}^i}\xi^i\right\|\left\|\sum_i\eta^i\right\|+\left\|\sum_i \xi^i\right\|\left\|\sum_i\chi_{W_{-j}^i}\eta^i\right\|.
\end{align*}

Then take $a \in \ell^2(\mathbb{Z}^{+})$, where $a: \mathbb{Z}^{+} \rightarrow \mathbb{C}, j \mapsto a(j):=a_{j}$, and define the operator $T_{a}$ on $\ell^{2}(X)$ by 
$$T_{a}= \sum_{j=1}^{\infty}a_{j}\pi(g^{-j}hg^{j}).$$
Then, for any $\xi, \eta \in \ell^{2}(X)$, using Cauchy-Schwarz inequality, we have
\begin{align*}
 |\langle T_{a}\xi, \eta\rangle| &\leq  \sum_{j=1}^{\infty}|a_{j}|\left(\left\|\sum_i\chi_{W_{-j}^i}\xi^i\right\|\left\|\eta\right\|+\left\|\xi\right\|\left\|\sum_i\chi_{W_{-j}^i}\eta^i\right\|\right)\\&\leq \|\eta\|\left(\sum_{j=1}^{\infty}|a_{j}|^2\right)^{\frac{1}{2}}\left(\sum_{j=1}^{\infty}\left\|\sum_i\chi_{W_{-j}^i}\xi^i\right\|^2\right)^{\frac{1}{2}}\\& \quad+ \|\xi\|\left(\sum_{j=1}^{\infty}|a_{j}|^2\right)^{\frac{1}{2}}\left(\sum_{j=1}^{\infty}\left\|\sum_i\chi_{W_{-j}^i}\eta^i\right\|^2\right)^{\frac{1}{2}}\\&\leq 2\|a\|_{2}\|\xi\|\|\eta\| .   
\end{align*}

By the Riesz representation theorem and the equivalent definitions of the operator norm, we obtain
$$\|T_{a}\xi\|=\operatorname{Sup}_{\|\eta\|=1}|\langle T_{a}\xi, \eta\rangle|\leq 2\|a\|_{2}\|\xi\|,$$
Then $$\|T_{a}\|=\operatorname{sup}_{\|\xi\|=1}\|T_{a}\xi\|\leq\sup_{\|\xi\|=1}2\|a\|_{2}\|\xi\|=2\|a\|_{2}.$$
\end{proof}

\section{The simplicity of the action-based \texorpdfstring{$C^{*}$}{C*}-algebra}

\subsection{A conjugation averaging argument}

The argument of Bekka, Cowling and de la Harpe \cite{BCD95} cannot be directly adapted to
our setting. Their weak-containment method is tailored to the left regular
representation, where the diagonal coefficient of the Dirac vector at the
identity is $\delta_e$. For the action representation
$\pi:G\to \mathcal{U}(\ell^2(X))$, however, one has
\[
\langle \pi(h)\delta_x,\delta_x\rangle=
\begin{cases}
1,& h\cdot x=x,\\
0,& h\cdot x\neq x,
\end{cases}
\]
so this coefficient detects $\operatorname{Stab}(x)$ rather than $\{e\}$.
Since the action on $X$ is not free in our applications, we cannot use their
approach and instead argue by conjugation averaging. In this subsection, we prove the simplicity of the action-based $C^*$-algebra
directly from property $\pa^{X}$ by a Powers-type averaging argument.

For $x\in C_X^*(G)$ and $g\in G$, we consider the conjugates
\[
\pi(g^{-1})\, x\, \pi(g),
\]
and their averages.
The key point is that property $\pa^{X}$ implies that the
nontrivial group terms disappear under such averages, while the scalar part
remains unchanged.

\begin{lemma}\label{lem:averaging-finite-sum}
Let $G$ act faithfully on a countable hyperbolic metric space $X$, and let
\(\pi:G\to \mathcal{U}(\ell^2(X)) \)
be the associated action representation. Assume that $G$ has property
$\pa^{X}$. Let
\[
T=a_e Id+\sum_{h\in F} a_h \pi(h),
\]
where $F\subseteq G\setminus\{e\}$ is finite. Then there exist $g\in G$ and
$C>0$ such that, for every $J\ge1$,
\[
\left\|
\frac1J\sum_{j=1}^J \pi(g^{-j})T\pi(g^j)-a_e Id
\right\|
\le
\frac{C}{\sqrt J}\sum_{h\in F}|a_h|.
\]
\end{lemma}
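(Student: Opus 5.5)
Apply property $\pa^{X}$ (Definition~\ref{ana1}) to the finite set $F\subset G\setminus\{e\}$. This gives one element $g\in G$ and one constant $C>0$, valid simultaneously for every $h\in F$, such that
\[
\Bigl\|\sum_{j=1}^{\infty} b_j\,\pi(g^{-j}hg^j)\Bigr\|_{\mathrm{op}}\le C\|b\|_{\ell^2}
\qquad\text{for all } b\in\ell^2(\mathbb{Z}^+).
\]
The uniformity in $h$ is essential: a single $g$ must serve all the terms of $T$ at once.

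Next expand the average using that $\pi$ is a homomorphism. We have $\pi(g^{-j})\pi(h)\pi(g^j)=\pi(g^{-j}hg^j)$, and $\pi(g^{-j})\,\mathrm{Id}\,\pi(g^j)=\mathrm{Id}$ by Lemma~\ref{unit}. Hence
\[
\frac1J\sum_{j=1}^J \pi(g^{-j})T\pi(g^j)-a_e\,\mathrm{Id}
=\sum_{h\in F} a_h\Bigl(\frac1J\sum_{j=1}^J\pi(g^{-j}hg^j)\Bigr).
\]
So the scalar part survives exactly. By the triangle inequality, it remains to bound each inner average by $C/\sqrt J$.

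For fixed $h$, take the finitely supported sequence $b^{(J)}$ with $b^{(J)}_j=1/J$ for $1\le j\le J$ and $b^{(J)}_j=0$ otherwise. It lies in $\ell^2(\mathbb{Z}^+)$, and
\[
\|b^{(J)}\|_{\ell^2}=\Bigl(J\cdot\frac{1}{J^2}\Bigr)^{1/2}=\frac{1}{\sqrt J}.
\]
Since the sum is finite, no convergence issue arises in interpreting $\sum_j b_j\pi(g^{-j}hg^j)$. The $\pa^{X}$ estimate then gives
\[
\Bigl\|\frac1J\sum_{j=1}^J\pi(g^{-j}hg^j)\Bigr\|\le \frac{C}{\sqrt J}.
\]
Summing over $h\in F$ with weights $|a_h|$ yields the claimed bound.

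The main point to check is that Definition~\ref{ana1} really provides one $g$ and one $C$ working for all $h\in F$ simultaneously, not a separate $g$ for each $h$. As written, ``there exist $g$ and $C$ such that for any $h\in F$'' does give this, so the argument goes through. Everything else is routine; faithfulness and hyperbolicity play no role beyond making the setting well defined via Lemma~\ref{b}.
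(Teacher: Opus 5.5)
Your proposal is correct and follows the paper's proof essentially step for step. You apply $\pa^{X}$ once to $F$ to obtain a single $g$ and $C$, test it on the sequence with $J$ entries equal to $1/J$ (of $\ell^2$-norm $1/\sqrt{J}$), expand the conjugation average so that $a_e\,Id$ survives exactly, and conclude by the triangle inequality.
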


\begin{proof}
Since $G$ has property $\pa^{X}$, for the finite subset
$F\subseteq G\setminus\{e\}$ there exist an element $g\in G$ and a constant
$C>0$ such that for every $h\in F$ and every
$a=(a_j)_{j\ge1}\in \ell^2(\mathbb Z_+)$, one has
\[
\left\|\sum_{j\ge1} a_j\,\pi(g^{-j}hg^j)\right\|
\le C\|a\|_{\ell^2}.
\]

Fix $J\ge1$, and define a sequence $a=(a_j)_{j\ge1}$ by
\[
a_j=
\begin{cases}
\frac1J,& 1\le j\le J,\\[4pt]
0,& j>J.
\end{cases}
\]
Then
\[
\|a\|_{\ell^2}
=
\left(\sum_{j=1}^\infty |a_j|^2\right)^{1/2}
=
\left(J\cdot \frac1{J^2}\right)^{1/2}
=
\frac1{\sqrt J}.
\]
Hence, for every $h\in F$,
\[
\left\|
\frac1J\sum_{j=1}^J \pi(g^{-j}hg^j)
\right\|
\le
\frac{C}{\sqrt J}.
\]

Now expand the conjugation average of $T$:

\begin{align*}
&\frac1J\sum_{j=1}^J \pi(g^{-j})T\pi(g^j)=
\frac1J\sum_{j=1}^J \pi(g^{-j})
\left(a_e Id+\sum_{h\in F} a_h\pi(h)\right)\pi(g^j)\\&= a_e\cdot \frac1J\sum_{j=1}^J \pi(g^{-j})Id \pi(g^j)
+
\sum_{h\in F} a_h
\left(
\frac1J\sum_{j=1}^J \pi(g^{-j})\pi(h)\pi(g^j)
\right)\\&=a_e Id+
\sum_{h\in F} a_h
\left(
\frac1J\sum_{j=1}^J \pi(g^{-j}hg^j)
\right).
\end{align*}

Applying the triangle inequality for the operator norm,
\begin{align*}
   \left\|
\frac1J\sum_{j=1}^J \pi(g^{-j})T\pi(g^j)-a_e Id
\right\|
&\le
\sum_{h\in F}|a_h|
\left\|
\frac1J\sum_{j=1}^J \pi(g^{-j}hg^j)
\right\| \\ & \le
\frac{C}{\sqrt J}\sum_{h\in F}|a_h|.
\end{align*}

Since the right-hand side tends to $0$ as $J\to\infty$, the conclusion follows.
\end{proof}

\begin{theorem}\label{thm:Pana-implies-simple-finite}
Assume that the action of a group $G$ acting on a countable hyperbolic metric space $X$ is faithful. Let $\pi$ be the associated action representation. If $G$ has property $P^{X}_{\text{analytic}}$, then $C_X^*G$ is simple.
\end{theorem}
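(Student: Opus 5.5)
The plan is a Powers-type argument built on Lemma~\ref{lem:averaging-finite-sum}. Let $I\subseteq C_X^*G$ be a nonzero closed two-sided ideal. By Lemma~\ref{unit} the algebra is unital, so it suffices to show that $I$ contains an invertible element. Pick $0\neq x\in I$ and pass to the positive element $p=x^*x\in I$, which has $\|p\|=\|x\|^2>0$. Since $\pi(\mathbb CG)$ is dense in $C_X^*G$, for a given $\varepsilon>0$ we choose $T=a_e\,Id+\sum_{h\in F}a_h\pi(h)$ with $F\subseteq G\setminus\{e\}$ finite and $\|p-T\|<\varepsilon$. Lemma~\ref{lem:averaging-finite-sum} then supplies $g\in G$ and $C>0$. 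Consider the averaging map
\[
\Phi_J(y)=\frac1J\sum_{j=1}^J\pi(g^{-j})\,y\,\pi(g^j),
\]
which is unital, positive and contractive. Because $I$ is an ideal, $\Phi_J(p)\in I$. Moreover
\[
\|\Phi_J(p)-a_e Id\|\le \|\Phi_J(p-T)\|+\|\Phi_J(T)-a_eId\|<\varepsilon+\frac{C}{\sqrt J}\sum_{h\in F}|a_h|.
\]
Taking $J$ large, $I$ contains an element within $2\varepsilon$ of the scalar $a_e\,Id$. If $|a_e|>2\varepsilon$, this element is invertible by the Neumann series, so $I=C_X^*G$.

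It therefore remains to show that the scalar $a_e$ cannot be made arbitrarily small relative to $\varepsilon$. This is the main obstacle, because $C_X^*G$ carries no obvious faithful trace: the coefficient $a_e$ is not canonically attached to $p$, since $\pi$ need not be injective on $\mathbb CG$. To control $a_e$, I would test against Dirac vectors. For any $z\in X$, applying $\langle\,\cdot\,\delta_z,\delta_z\rangle$ to the estimate above gives
\[
\Bigl|\,a_e-\frac1J\sum_{j=1}^J\|x\,\delta_{g^j\cdot z}\|^2\Bigr|\le 2\varepsilon \quad\text{for } J \text{ large}.
\]
Consequently $a_e$ is, up to $2\varepsilon$, a nonnegative real number that is the same for every $z$. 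It equals the asymptotic average of the function $z\mapsto\|x\delta_z\|^2$ along any $g$-orbit.

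So the task becomes: find some $z$ whose $g$-orbit average of $\|x\delta_{\cdot}\|^2$ is bounded below by a constant independent of $\varepsilon$. The first thing I would try is to exploit the freedom in the ideal. I would replace $p$ by $q=\sum_{k}\pi(u_k)^*\,p\,\pi(u_k)$ for finitely many $u_k\in G$; this element still lies in $I$ and is bounded below by $p$. Since $x\neq0$, there is some $y_0$ with $\|x\delta_{y_0}\|^2=c>0$. The $u_k$ should then be chosen so that $\langle q\,\delta_{g^j z},\delta_{g^j z}\rangle\ge c$ for a positive proportion of $j$, and this has to be arranged before fixing $\varepsilon$. This uses that $\langle q\delta_w,\delta_w\rangle\ge\|x\delta_{u_k w}\|^2$ for each $k$. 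Only after that would I fix $\varepsilon<c/10$ and rerun the estimate for $q$.

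If arranging positive density along a single $g$-orbit proves too rigid, I would fall back on a different argument. The family $\Phi_J(q)$ is bounded and converges in norm up to $\varepsilon$ to scalars, for every choice of approximant. Letting $\varepsilon\to0$ then shows that $\lim_J\Phi_J(q)=\lambda\,Id$ exists in $I$. The aim would be to prove $\lambda\neq0$ by contradiction: if $\lambda=0$, the positive element $q$ would have vanishing $g$-orbit averages of $\|x\delta_{\cdot}\|^2$ at every point. I would try to show that this, combined with the $\ell^2$ bound of property $\pa^X$, forces $x=0$.
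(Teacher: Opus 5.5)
Your reduction is correct up to the point you flag yourself, but that point is a genuine gap, and it cannot be closed in the generality of the statement. It is cleanest to phrase it via a trace. By Lemma~\ref{lem:averaging-finite-sum} and contractivity of $\Phi_J$, property $\pa^X$ gives $|a_e|\le\|a_e Id+\sum_{h\in F}a_h\pi(h)\|$. Hence $\pi$ is injective on $\mathbb{C}G$, and $\tau(\pi(\alpha)):=\alpha_e$ extends to a tracial state on $C^*_XG$. So, contrary to your worry, $a_e$ \emph{is} canonical: $|a_e-\tau(p)|\le\varepsilon$. What you must show is therefore exactly that $\tau(x^*x)>0$ for $x\neq0$, i.e.\ that $\tau$ is faithful; your fallback ``$\lambda=0$ forces $x=0$'' is this statement. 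The paper's proof runs the same Powers averaging, after multiplying by $\pi(k^{-1})$ so that some nonzero coefficient $a_k$ sits at the identity, and then fixes $\varepsilon<|a_k|/4$. Since $T$, hence $a_k$, was chosen after $\varepsilon$, this is circular and does not supply the missing step either.

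The step actually fails. The GNS representation of $\tau$ is $\lambda_G$, so $\pi(\alpha)\mapsto\lambda(\alpha)$ extends to a surjective $*$-homomorphism $\rho\colon C^*_XG\to C^*_r(G)$ with $\tau=\langle\rho(\cdot)\delta_e,\delta_e\rangle$. Its kernel is a proper ideal on which $\tau$ vanishes. Suppose some $\mathrm{Stab}_G(x_0)$ contains a free group $\langle a,b\rangle$, and set $\alpha=\frac14(a+a^{-1}+b+b^{-1})$. Then $\|\pi(\alpha)\|=1$ (test on $\delta_{x_0}$) but $\|\lambda(\alpha)\|=\sqrt3/2$ by Kesten, so $\ker\rho\neq0$ and $C^*_XG$ is not simple.

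This occurs under the hypotheses of the statement. Let $G=F_2*\mathbb{Z}$ act on the vertex set of its Bass--Serre tree; the action is faithful, the space is $0$-hyperbolic, and vertex stabilizers are conjugate to $F_2$ or $\mathbb{Z}$. Given finite $F$, take $g$ hyperbolic with axis disjoint from the fixed vertex or axis of each $h\in F$. Let $p_h$ be the projection of that set onto the axis, and let $X_0^h$ be the set of vertices whose nearest-point projection to the axis lies in $[p_h,gp_h)$. Using that edge stabilizers are trivial, one checks $h(X\setminus X_0^h)\subseteq X_0^h$, and the sets $g^jX_0^h$ are pairwise disjoint. So the Bekka--Cowling--de la Harpe estimate yields $\pa^X$ with $C=2$.

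For $0\neq x\in\ker\rho$, every approximant has all coefficients $|a_k|=|\tau(\pi(k^{-1})(T-x))|<\varepsilon$, and $\tau(q)=0$ for your $q$. So neither your choice of the $u_k$ nor the paper's choice of $\varepsilon$ can work. Your argument does close under an extra hypothesis such as amenability of all point stabilizers. Then $\pi\prec\lambda_G$, $\rho$ is an isomorphism, $\tau$ is faithful, and you may fix $\varepsilon<\tau(x^*x)/4$ before approximating $x^*x$.
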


\begin{proof}
It is sufficient to show that every nonzero closed two-sided ideal of $C_X^*(G)$ is equal to $C_X^*(G)$. Let $I\subseteq C_X^*(G)$ be a nonzero closed two-sided ideal. Choose a nonzero
element $x\in I$. Since the algebraic linear span of
\(\{\pi(g):g\in G\}\) is norm dense in \(C_X^*(G)\), there exist a finite subset \(A\subseteq G\)
and coefficients \(a_h\in\mathbb C\) such that
\[
T=\sum_{h\in A} a_h\pi(h)
\]
satisfies
\[
\|x-T\|<\varepsilon,
\]
where \(\varepsilon>0\) will be specified later. As $T\neq 0$, then at least one coefficient $a_h$ is nonzero. Choose
$k\in A$ such that $a_k\neq 0$. Now define
\[
b:=\pi(k^{-1})x\in I
\qquad\text{and}\qquad
T_0:=\pi(k^{-1})T.
\]
Since $\pi(k^{-1})$ is unitary, left multiplication by $\pi(k^{-1})$ preserves
the operator norm, so
\[
\|b-T_0\|=\|x-T\|<\varepsilon.
\]

Let us examine the finite sum $T_0$. When $h=k$, the corresponding term is
\( a_k\,\pi(k^{-1}k)=a_k\,\pi(e)=a_k Id\). Hence, the coefficient of the identity operator in $T_0$ is exactly $a_k$.
Therefore we may write
\[
T_0=\pi(k^{-1})T=\sum_{h\in A} a_h\,\pi(k^{-1}h)=a_k Id+\sum_{h\in F} a_h\pi(h),
\]
where $F\subseteq G\setminus\{e\}$ is finite.

Apply Lemma \ref{lem:averaging-finite-sum} to $T_0$. Then there exist $g\in G$ and $C>0$ such that
\[
\left\|
\frac1J\sum_{j=1}^J \pi(g^{-j})T_0\pi(g^j)-a_k Id
\right\|
\le
\frac{C}{\sqrt J}\sum_{h\in F}|a_h|
\]
for every $J\ge1$. In particular,
\[
\frac1J\sum_{j=1}^J \pi(g^{-j})T_0\pi(g^j)\longrightarrow a_k Id
\qquad\text{in norm.}
\]

Next, we compare the averages of $b$ and $T_0$. Define
\[
M_J(y):=\frac1J\sum_{j=1}^J \pi(g^{-j})y\pi(g^j),
\qquad y\in C_X^*(G).
\]
Then
\[
M_J(b)-M_J(T_0)
=
\frac1J\sum_{j=1}^J \pi(g^{-j})(b-T_0)\pi(g^j).
\]
Taking norms and using the triangle inequality,
\[
\|M_J(b)-M_J(T_0)\|
\le
\frac1J\sum_{j=1}^J \|\pi(g^{-j})(b-T_0)\pi(g^j)\|.
\]
Since each $\pi(g^j)$ is unitary, conjugation by $\pi(g^j)$ preserves the
operator norm. Therefore every term in the sum is equal to $\|b-T_0\|$, and
hence
\[
\|M_J(b)-M_J(T_0)\|
\le
\frac1J\sum_{j=1}^J \|b-T_0\|
=
\|b-T_0\|
<
\varepsilon.
\]

Now fix $\varepsilon<|a_k|/4$. Since $M_J(T_0)\to a_k Id$ in norm, there exists
$J_0$ such that for all $J\ge J_0$,
\[
\|M_J(T_0)-a_k Id\|<\frac{|a_k|}{4}.
\]
Thus, for $J\ge J_0$,
\[
\|M_J(b)-a_k Id\|
\le
\|M_J(b)-M_J(T_0)\|+\|M_J(T_0)-a_k Id\|
<
\frac{|a_k|}{4}+\frac{|a_k|}{4}
=
\frac{|a_k|}{2}.
\]

For each $J$, the element $M_J(b)$ belongs to $I$. Indeed, $b\in I$, and since
$I$ is a two-sided ideal, every conjugate
\( \pi(g^{-j})\,b\,\pi(g^j)\)
also belongs to $I$. Therefore, their finite average $M_J(b)$ belongs to $I$.

Since $I$ is closed and $M_J(b)$ converges in norm to $a_k Id$, it follows that
$a_k Id\in I$. As $a_k\neq 0$, we conclude that $I$ contains the identity operator $Id$. Therefore $I=C_X^*(G)$. This proves that $C_X^*(G)$ is simple.
\end{proof}

\begin{corollary}\label{cor:Pnai-implies-simple-finite}
Assume that the action of a group $G$ acting on a countable hyperbolic metric space $X$ is faithful. Let $\pi$ be the associated action representation.
If $G$ has property $P^{X}_{\text{naive}}$, then $C_X^*G$ is simple.
\end{corollary}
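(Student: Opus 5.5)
This corollary is obtained by chaining the two results already established. Assume $G$ acts faithfully by isometries on the countable hyperbolic metric space $X$ and has property $\pn^{X}$.

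First, invoke Proposition \ref{P1}. Its hypotheses are exactly a faithful isometric action on a countable hyperbolic space together with $\pn^{X}$, so it gives that $G$ has property $\pa^{X}$, with constant $C=2$. Second, feed this into Theorem \ref{thm:Pana-implies-simple-finite}. That theorem needs only faithfulness of the action and $\pa^{X}$, and it yields that every nonzero closed two-sided ideal of $C^{*}_{X}G$ contains the identity. Hence $C^{*}_{X}G$ is simple.

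The one point I would check is that the notion of action used in both results is the same. Definition \ref{naive1} speaks only of $G$ acting on $X$, whereas Proposition \ref{P1} and Lemma \ref{free1} use an isometric action. Since the action representation $\pi$ is defined for the isometric action in the first place, the implicit standing hypothesis of the corollary is that the action is isometric. I would state this explicitly.

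I would also note that the finite set $F$ produced in the proof of Theorem \ref{thm:Pana-implies-simple-finite}, after left-multiplying by $\pi(k^{-1})$, lies in $G\setminus\{e\}$. That is exactly the class of finite sets to which $\pn^{X}$, and hence $\pa^{X}$, applies, so no further compatibility issue arises. No genuine obstacle is expected; the corollary is a formal composition of the two earlier results.
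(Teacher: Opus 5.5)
Your two steps are exactly the paper's proof: Proposition~\ref{P1}, then Theorem~\ref{thm:Pana-implies-simple-finite}. You reproduce it faithfully, but the argument inherits a genuine gap, because the second step is not valid.

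\textbf{The circular choice of $\varepsilon$.} In the proof of Theorem~\ref{thm:Pana-implies-simple-finite}, $T$ is chosen with $\|x-T\|<\varepsilon$. Only afterwards is $\varepsilon<|a_k|/4$ imposed, yet $a_k$ is a coefficient of $T$ and so depends on $\varepsilon$. This circularity is fatal, not cosmetic:
\begin{itemize}
\item Under $\pa^X$ the averaging estimate gives $|a_e|\le\|\sum_h a_h\pi(h)\|$. Hence $\tau(\sum_h a_h\pi(h)):=a_e$ is well defined and extends to a tracial state on $C^*_XG$ with $\tau(\pi(g))=\delta_{g,e}$.
\item The GNS representation of $\tau$ is the left regular representation $\lambda$ of $G$, viewed as a discrete group. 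This gives a surjection $\rho\colon C^*_XG\to C^*_r(G)$, $\pi(g)\mapsto\lambda(g)$, whose kernel is a proper closed ideal.
\item For $x\in\ker\rho$, every approximant $T$ satisfies $|a_k|=|\tau(\pi(k^{-1})(T-x))|\le\|T-x\|<\varepsilon$ for every $k$. So the required $\varepsilon$ never exists.
\item $\ker\rho\neq 0$ as soon as some point stabilizer is non-amenable, because then $\pi$ is not weakly contained in $\lambda$.
\end{itemize}

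\textbf{This happens under $\pn^X$.} Let $G=A*B$ with $A=\langle a_1,a_2\rangle$ and $B$ free of rank two, acting on the vertex set $X$ of its Bass--Serre tree. The action is faithful, and $X$ is countable and $0$-hyperbolic. Edge stabilizers are trivial, so every nontrivial elliptic element fixes exactly one vertex.

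Take a loxodromic $g$ whose axis lies in a half-tree missing the fixed vertices of the elliptic elements of $F$ and the axes of the loxodromic ones. Then $g$ plays ping-pong with each $h\in F$ and meets all requirements of Definition~\ref{naive1}.

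Now let $f=\tfrac14(a_1+a_1^{-1}+a_2+a_2^{-1})$ and let $\delta_{eA}$ be the Dirac mass at the vertex $eA$. Then $\pi(f)\delta_{eA}=\delta_{eA}$, whereas $\|\lambda(f)\|=\sqrt3/2$ by Kesten. Choose a continuous $\varphi$ with $\varphi|_{[-\sqrt3/2,\sqrt3/2]}=0$ and $\varphi(1)=1$. Then $\varphi(\pi(f))$ is nonzero and lies in $\ker\rho$.

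So $C^*_XG$ is not simple, and the corollary is false as stated. An extra hypothesis such as amenability of all point stabilizers repairs it. In that case $\rho$ is isometric, $\pa^X$ becomes the classical property $\pa$, and Bekka--Cowling--de la Harpe applies. However, this hypothesis fails for the vertex stabilizers of $\MapS$ acting on $\mathbb{X}$.
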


\begin{proof}
By Proposition~\ref{P1}, $P^{X}_{\mathrm{naive}}$ implies $P^{X}_{\mathrm{analytic}}$.
Then apply Theorem~\ref{thm:Pana-implies-simple-finite}.
\end{proof}

\begin{remark}
Nothing in the argument above depends on the natural topology of $G$. The proof only uses the
action representation on $\ell^2(X)$ and the fact that we use finite subsets implicitly assumes a discrete topology on $G$.
\end{remark}

\subsection{Application on normalized trace}

Recall that a \emph{normalized trace} on a $C^{*}$-algebra $A$ with unit is a linear map $\sigma: A \rightarrow \mathbb{C}$ such that $\sigma(e)=1$ and $\sigma(a^{*}a) \geq 0$ and $\sigma(ab)=\sigma(ba)$ for all $a,b \in A$. A \emph{state} is a positive linear functional of norm $1$, i.e. $\sigma(e)=1$ and $\sigma(a^{*}a) \geq 0$ for all $a,b \in A$.

\begin{proposition}\label{Trace}
If a group $G$ has the property \(\pa^{X}\), then the action-based $C^*$-algebra $C^*_{X}G$ admits a unique normalized trace.
\end{proposition}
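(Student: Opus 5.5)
The plan is to show existence and uniqueness separately, using the conjugation averages $M_J(y)=\frac1J\sum_{j=1}^J\pi(g^{-j})y\pi(g^j)$ from the proof of Theorem~\ref{thm:Pana-implies-simple-finite} together with Lemma~\ref{lem:averaging-finite-sum}. The candidate trace is the ``coefficient of the identity'' functional $\tau\bigl(\sum_h a_h\pi(h)\bigr)=a_e$, defined first on the dense subalgebra $\pi(\mathbb{C}G)$ and then extended by continuity.

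\emph{Uniqueness.} Let $\sigma$ be any normalized trace on $C^*_XG$. Since $\pi(g^j)$ is unitary with inverse $\pi(g^{-j})$, the trace property gives
\[
\sigma\bigl(\pi(g^{-j})y\pi(g^j)\bigr)=\sigma(y),
\]
and hence $\sigma(M_J(y))=\sigma(y)$ for all $J$. A positive unital functional on a unital $C^*$-algebra is bounded with norm $\sigma(Id)=1$, so $\sigma$ is norm continuous. Take $T=a_eId+\sum_{h\in F}a_h\pi(h)$. By Lemma~\ref{lem:averaging-finite-sum}, $M_J(T)\to a_eId$ in norm, so
\[
\sigma(T)=\lim_J\sigma(M_J(T))=a_e .
\]
In particular $\sigma(\pi(h))=0$ for every $h\neq e$. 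Thus $\sigma$ is determined on the dense set $\pi(\mathbb{C}G)$, and by continuity on all of $C^*_XG$.

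\emph{Existence.} The main obstacle is well-definedness. The expression $\tau(T)=a_e$ must depend only on the operator $T$, and $\tau$ must be bounded. Lemma~\ref{f} shows that $\pi$ is injective on $G$, so distinct group elements give distinct unitaries. However, distinct unitaries could still be linearly dependent, so the coefficient $a_e$ is not obviously intrinsic. The averaging argument settles this. Write $T$ as a combination over distinct group elements with identity coefficient $a_e$. Then $M_J(T)\to a_eId$ by Lemma~\ref{lem:averaging-finite-sum}, where the element $g$ depends on the finitely many group elements involved. Since conjugation by unitaries is isometric, $\|M_J(T)\|\le\|T\|$, and therefore
\[
|a_e|\le\|T\|.
\]
Consequently, if two formal expressions define the same operator, their difference is the zero operator, and its identity coefficient is $0$. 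So $\tau$ is well defined on $\pi(\mathbb{C}G)$, linear, and bounded by $1$. It therefore extends uniquely to a bounded linear functional on $C^*_XG$.

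It remains to verify the three defining properties on $\pi(\mathbb{C}G)$; each then passes to the closure by continuity (positivity is a closed condition, since $a\mapsto a^*a$ is continuous). First, $\tau(Id)=1$ is immediate. Second, the trace property reduces by linearity to group elements: $\tau(\pi(h)\pi(k))=\tau(\pi(k)\pi(h))$ because $hk=e$ if and only if $kh=e$. Third, for positivity, take $T=\sum_h a_h\pi(h)$ over distinct $h$. The identity coefficient of
\[
T^*T=\sum_{h,k}\overline{a_h}a_k\,\pi(h^{-1}k)
\]
comes exactly from the terms with $h=k$. Hence $\tau(T^*T)=\sum_h|a_h|^2\ge0$. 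This gives a normalized trace, and it is unique by the first part.
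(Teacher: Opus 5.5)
Your proposal is correct. The uniqueness half is essentially the paper's argument. The paper fixes $h\neq e$ and uses traciality to get $\sigma(\pi(h))=\sigma(\pi(g^{-j}hg^{j}))$. It then averages over $j\le J$ and invokes the $\pa^{X}$ estimate to obtain $|\sigma(\pi(h))|\le C/\sqrt{J}$, hence $\sigma(\pi(h))=0$. You do the same for the whole finite sum at once via Lemma~\ref{lem:averaging-finite-sum}. You also make explicit the bound $|\sigma(y)|\le\|y\|$ (a positive unital functional has norm $\sigma(Id)=1$), which the paper uses tacitly.

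Where you genuinely go beyond the paper is existence. The paper's proof only shows that any normalized trace must be $\sum_h a_h\pi(h)\mapsto a_e$, and then asserts that this ``extends uniquely''. It never checks that $a_e$ depends only on the operator $\pi(\sum_h a_h h)$ rather than on the formal sum. Nor does it check that the resulting functional is bounded, positive and tracial.

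The well-definedness point is not automatic. For non-free actions $\pi$ can fail to be injective on $\mathbb{C}G$; for example, $\sum_{s\in S_3}\operatorname{sgn}(s)\,s$ acts as $0$ on $\ell^2$ of three points. Your inequality $|a_e|\le\|T\|$ comes from $\|M_J(T)\|\le\|T\|$ together with $M_J(T)\to a_e Id$. It settles well-definedness and boundedness in one stroke, and shows in passing that $\pi$ is injective on $\mathbb{C}G$ under $\pa^{X}$. After that, your checks of the unit, traciality and positivity on the dense subalgebra, and their passage to the closure, are routine and correct.

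So your argument proves the full ``exists and is unique'' statement. The paper's written proof establishes only that there is at most one normalized trace.
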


\begin{proof}

Suppose $\sigma$ is any normalized trace on $C^*_{X}G$. By assumption, $G$ has the property \(\pa^{X}\). Given any $h \in G \setminus \{e\}$, there exist an element $g \in G$ and a constant $C$ such that for all $a \in \ell^{2}(\mathbb{Z}^{+})$, we have 
\[
\left\| \sum_{j=1}^{\infty} a_j \pi(g^{-j} h g^j) \right\|_{\mathrm{op}} \leq C \left\| a \right\|_{\ell^{2}}.
\]

Since $\sigma(ab)=\sigma(ba)$ for all $a, b \in C^*_{X}(G)$ and $\pi(g^{-j})\pi(g^{j})=I$, we always have $\sigma(\pi(h))=\sigma(\pi(g^{-j}hg^{j}))$ for all $j\in \mathbb{Z}^{+}$.
Then $$\|\sigma(\pi(h))\|= \|\sigma(\pi(g^{-j}hg^{j}))\|=\left\|\sigma \big(\frac{1}{J}\sum_{j=1}^{J} \pi(g^{-j}hg^{j})\big)\right\|.$$
Take 
$$a_{j}=\begin{cases}
    \frac{1}{J}, & \mbox{if}~j \leq J,\\
    0, & \mbox{if}~j>J.
\end{cases}$$
By the property \(\pa^{X}\), we have 
$$\|\sigma(\pi(h))\|=\left\|\sigma \big(\frac{1}{J}\sum_{j=1}^{J} \pi(g^{-j}hg^{j})\big)\right\|\leq C\left(\sum_{j=1}^{\infty} |a_j|^{2}\right)^{\frac{1}{2}}=\frac{C}{\sqrt{J}}.$$
Hence $\sigma(\pi(h)) = 0$ as $J \rightarrow \infty$. One has $\sigma(\sum a_{h}\pi(h) )=a_{e}$. So it is uniquely determined on the dense subalgebra $\mathbb{C}G$ by linearity. By continuity of $\sigma$, this extends uniquely to the entire $C^*$-algebra $C^*_{X}G$.

\end{proof}



\section{Application to big mapping class groups}

Building on the projection complex machinery of Bestvina-Bromberg-Fujiwara \cite{BBF15}, Horbez, Qing and Rafi \cite{HQR20} constructed, whenever \(S\) contains a nondisplaceable subsurface, a hyperbolic space \(\mathbb{X}:=\mathcal{C}(\mathbf{Y}_{K})\) equipped with a continuous, nonelementary, isometric action of \(\MapS\). Now, we recall the construction of the hyperbolic space $\mathbb{X}$.

\subsection{Quasi-trees of curve graphs for big mapping class groups}
Let $S$ be a connected surface. The curve graph $\mathcal C(S)$ has vertices the isotopy classes of essential simple closed curves on $S$, and an edge joins two vertices if they admit disjoint representatives. Assume that S contains a connected nondisplaceable subsurface $K$ of finite type. Let $\mathcal{C}_{S}(K)$ be the graph whose vertices are the homotopy classes of simple closed curves on $S$ that have a representative contained in $K$ which is essential in $K$, where an edge joins two distinct isotopy classes if they can be realized by disjoint representatives in $S$. The curve graph $\mathcal{C}_{S}(K)$ is an induced subgraph of the curve graph $\mathcal{C}(S)$ of $S$. Also, $\mathcal{C}_{S}(K)$ only depends on the isotopy class of $K$: if $K$ and $K_{1}$ are isotopic, then there is a natural identification between $\mathcal{C}_{S}(K)$ and $\mathcal{C}_{S}(K_{1})$. We write $[K]$ for the isotopy class of $K$. Then we consider $\mathbf{Y}_{K}$ to be the orbit of $K$ under the metric-preserving action of $\operatorname{Map(S)}$. Each $K$ has an associated geodesic metric space $\mathcal{C}_{S}(K)$.

As \(K\) is nondisplaceable and of finite type, given any two non-isotopic subsurfaces \(K_{1}, K_{2} \in \operatorname{Homeo}(S) \cdot K\), at least one of the boundary components of \(K_{2}\) intersects the subsurface
\(K_{1}\) in an essential curve or arc (since $K$ is nondisplaceable we have $K_1\cap K_2\neq\varnothing$, and if
$\partial K_2\cap K_1=\varnothing$ then $K_1$ and $K_2$ are nested and isotopic, contradicting $K_1\neq K_2$). For each of these arcs, closing them up along $\partial{K_{1}}$ in potentially two different ways gives us a collection of essential simple closed curves in $K_{1}$. For \(K_{1}, K_{2} \in \operatorname{Map}(S) \cdot K\), we define a
projection
$$
\pi_{K_{1}}\left(K_{2}\right) \subseteq \mathcal{C}_{S}\left(K_{1}\right),
$$
to be the union of all essential simple closed curves and closed up arcs coming from $\partial K_2\cap K_1$. Then we define distances between subsurface projections. For \(K_{1}, K_{2}, K_{3} \in \operatorname{Map}(S) \cdot K\)
, define
$$
d_{K_{1}}(K_{2}, K_{3})=\operatorname{diam}_{\mathcal{C}_{S}(K_{1})}\left(\pi_{K_{1}}(K_{2})\cup\pi_{K_{1}}(K_{3})\right).
$$

\begin{proposition}\cite{BBF15, HQR20}
    Let $S$ be a connected orientable surface that contains a connected nondisplaceable subsurface $K$ of finite type. Then the family $\{(\mathcal{C}_{S}(K_{1}), \pi_{K_{1}})_{K_{1}\in \mathbf{Y}_{K}}\}$ satisfies the projection axioms \textnormal{(P1), (P2)}, and \textnormal{(P3)}.

    Moreover, If each \(\mathcal{C}_{S}(K_{1})\) is $\delta$-hyperbolic for some common $\delta \geq 0$, then \(\mathbb{X}=\mathcal{C}(\mathbf{Y}_{K})\) is also $\delta$-hyperbolic.
\end{proposition}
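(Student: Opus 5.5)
The statement has two parts, and I would handle them separately. The first part says the family $\{(\mathcal{C}_{S}(K_{1}), \pi_{K_{1}})\}_{K_1\in\mathbf{Y}_K}$ satisfies the projection axioms (P1)--(P3). The second part says the resulting quasi-tree of curve graphs $\mathcal{C}(\mathbf{Y}_K)$ is $\delta$-hyperbolic.

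For the axioms, I would first check that the projections are well defined with uniformly bounded diameter. For distinct $K_1,K_2\in\mathbf{Y}_K$, nondisplaceability gives $K_1\cap K_2\neq\varnothing$. The parenthetical argument in the paper then shows that $\partial K_2$ meets $K_1$ essentially, so $\pi_{K_1}(K_2)\neq\varnothing$. Any two curves or closed-up arcs coming from $\partial K_2\cap K_1$ are disjoint, or intersect a bounded number of times. Hence $\operatorname{diam}\pi_{K_1}(K_2)$ is bounded by a constant depending only on the topology of $K$, by the standard bound $d_{\mathcal C}(\alpha,\beta)\le 2i(\alpha,\beta)+1$.

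Next I would verify each axiom by transporting the Masur--Minsky arguments, since all $K_i$ are homeomorphic finite type surfaces.
\begin{itemize}
\item \textbf{(P3)}, finiteness: for fixed $K_2,K_3$ only finitely many $K_1$ have $d_{K_1}(K_2,K_3)>\theta$. I would deduce this from the Masur--Minsky bounded geodesic image theorem together with finiteness of the set of subsurfaces in an orbit that a fixed pair of compact surfaces can both cut in a complicated way.
\item \textbf{(P1)}, the Behrstock inequality: for distinct $K_1,K_2,K_3$, if $d_{K_1}(K_2,K_3)>\theta$ then $d_{K_2}(K_1,K_3)\le\theta$. This is Behrstock's lemma for overlapping subsurfaces, with Leininger's elementary constant $\theta=10$. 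Overlap holds here because nondisplaceable translates are never nested or disjoint.
\item \textbf{(P2)} is the finite-diameter statement above.
\end{itemize}
The main obstacle is (P3). In the infinite type setting, the finiteness of relevant subsurfaces needs the compactness of $K_2\cup K_3$ and a count of the translates of $K$ meeting a compact set essentially. I would cite \cite{HQR20} for this count rather than redo it.

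For hyperbolicity, I would invoke Bestvina--Bromberg--Fujiwara \cite{BBF15}. Given (P1)--(P3), after modifying the projections by a bounded amount so that the strong axioms hold, they construct $\mathcal{C}(\mathbf{Y}_K)$ by blowing up the projection complex with the spaces $\mathcal{C}_S(K_1)$. They also prove that if every vertex space is $\delta$-hyperbolic then the whole space is hyperbolic, with a constant depending on $\delta$ and $\theta$. All the $\mathcal{C}_S(K_1)$ are isometric to $\mathcal{C}_S(K)$ via mapping classes, so Masur--Minsky hyperbolicity gives a common $\delta$. Strictly speaking, the BBF constant need not equal the original $\delta$; I would state the conclusion as hyperbolicity with a uniform constant, and absorb this by renaming $\delta$.
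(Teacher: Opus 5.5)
Your route matches the paper's. The paper gives no argument for this proposition; it simply attributes it to \cite{BBF15, HQR20}, and you likewise check the easy axioms and defer the hard ones to those sources. Your caveat about the constant is correct and should be kept. \cite{BBF15} gives hyperbolicity of $\mathcal{C}(\mathbf{Y}_K)$ only with a constant depending on $\delta$ and the projection constant, so the ``also $\delta$-hyperbolic'' clause as written in the statement is stronger than what is proved. For $\delta=0$ it fails in general: pairwise adjacent triples in the projection complex already produce embedded cycles.

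The step I would not accept as written is your mechanism for the finiteness axiom. There is no finite ``count of the translates of $K$ meeting a compact set essentially''. By nondisplaceability, \emph{every} translate of $K$ meets $K$, and twisting along curves crossing $\partial K$ gives infinitely many distinct translates. The bounded geodesic image theorem in $\mathcal{C}(S)$ is also vacuous here, since $\mathcal{C}(S)$ has diameter $2$ when $S$ has infinite type. Your phrase ``finiteness of the set of subsurfaces that a fixed pair can both cut in a complicated way'' just restates the axiom.

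The missing idea is a localization step:
\begin{enumerate}
\item Fix a finite-type subsurface $\Sigma$ containing $K_2\cup K_3$ in its interior.
\item If $K_1$ cannot be isotoped into $\Sigma$, then, since $K_1$ cannot be isotoped off $K_2$, some component $c$ of $\partial\Sigma$ has nonempty projection to $\mathcal{C}_S(K_1)$.
\item Because $c$ is disjoint from $\partial K_2\cup\partial K_3$, the arcs that $c$ cuts in $K_1$ are disjoint from those cut by $\partial K_2$, and likewise for $\partial K_3$. The closed-up curves therefore intersect a bounded number of times, so $\pi_{K_1}(c)$ is uniformly close to both $\pi_{K_1}(K_2)$ and $\pi_{K_1}(K_3)$.
\item Hence $d_{K_1}(K_2,K_3)$ is uniformly bounded for such $K_1$, and for $\theta$ large only translates $K_1\subseteq\Sigma$ can satisfy $d_{K_1}(K_2,K_3)>\theta$.
\end{enumerate}
Even inside $\Sigma$ there are infinitely many translates. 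Finiteness among them comes from the Masur--Minsky fact that, in the finite-type surface $\Sigma$, only finitely many subsurfaces carry large projection between two markings. Apply it to markings of $\Sigma$ extending $\partial K_2$ and $\partial K_3$; these change the projections to $K_1$ by a bounded amount. It does not come from counting translates.
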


Then we keep the Koopman notion for the mapping class group \(\MapS\) acting by isometries on \(\mathbb{X}\). First, notice that \(\mathbb{X}^{(0)}\), the set of vertices of \(\mathbb{X}\), is countable. To simplify notation, we will simply write \(\mathbb{X}\) for its vertex set \(\mathbb{X}^{(0)}\) when no confusion is possible. We define the associated Hilbert space  \(\ell^{2}\left(\mathbb{X}\right)\) of square-summable complex-valued functions as follows:
$$\left\{f:\mathbb{X} \rightarrow \mathbb{C}|\sum_{x \in \mathbb{X}}|f(x)|^{2}<\infty\right\}.$$ We write $\mathcal{U}\left(\ell^{2}(\mathbb{X})\right)$ and $\mathcal{B}\left(\ell^{2}(\mathbb{X})\right)$ for the set of unitary operators and the set of bounded operators of Hilbert space \(\ell^{2}\left(\mathbb{X}\right)\), respectively. The action of \(\MapS\) on \(\mathbb{X}\) preserving the counting measure of $\mathbb{X}$ induces an action representation $\pi:\MapS \rightarrow  \mathcal{U}\left(\ell^{2}(\mathbb{X})\right)$, defined by $$(\pi(g)\xi)(x)=\xi\left(g^{-1} \cdot x\right), \forall g \in \MapS,\xi \in \ell^{2}(\mathbb{X}), x \in \mathbb{X},$$ where $\mathcal{U}(\ell^{2}(\mathbb{X}))=\{\pi: \ell^{2}(\mathbb{X}) \rightarrow \ell^{2}(\mathbb{X})| \|\pi(\xi)\|=\|\xi\|\}$.

\begin{lemma}\label{a}
 Let \(S\) be a connected orientable surface with positive complexity, and assume that
\(S\) contains a nondisplaceable connected subsurface of finite type. Then the action of \(\MapS\) on $\mathbb{X}$ is faithful i.e. for any $g \in \MapS$, if $g \cdot x=x$, for all $x \in \mathbb{X}$, then $g=e$.    
\end{lemma}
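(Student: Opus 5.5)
We prove that any $g\in\MapS$ fixing every vertex of $\mathbb{X}$ is trivial. The vertices of $\mathbb{X}=\mathcal{C}(\mathbf{Y}_K)$ are pairs $(K_1,\alpha)$ with $K_1\in\mathbf{Y}_K$ and $\alpha$ a vertex of $\mathcal{C}_S(K_1)$, that is, an isotopy class of curve essential in $K_1$. The action is $g\cdot(K_1,\alpha)=(gK_1,g\alpha)$. So if $g$ fixes all vertices, then $gK_1=K_1$ for all $K_1\in\mathbf{Y}_K$, and $g\alpha=\alpha$ for every curve $\alpha$ essential in some translate $K_1=fK$ with $f\in\MapS$. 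The plan is to show that this family of curves is rich enough that only the identity fixes all of them.

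\textbf{Step 1: reduce to every curve.} We claim that every essential simple closed curve $\gamma$ on $S$ lies in, or is essential in, some $fK$, or at least the fixed curves form a filling-type family.

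\begin{itemize}
\item If $\gamma$ is nonseparating, we use the change-of-coordinates principle. Choose a nonseparating curve $\beta$ essential in $K$. Since $K$ has positive complexity and is not an annulus, it contains such a curve, possibly after enlarging $K$ to a finite-type nondisplaceable subsurface of genus $\ge1$ when $S$ has genus. Then some $f\in\MapS$ has $f\beta=\gamma$, so $\gamma$ is essential in $fK$.
\item If $\gamma$ is separating, or $S$ is planar, we compare topological types. The separating curves in $K$ realize certain types, namely partitions of ends and genus. Using the homogeneity of $\MapS$ on curves of a given type, together with the possibility of enlarging $K$ within its nondisplaceable class, we obtain curves of enough types.
\end{itemize}

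Rather than proving that every curve is fixed, it suffices to obtain a family $\mathcal{F}$ of curves fixed by $g$ that satisfies two conditions. First, $\mathcal{F}$ is $\MapS$-invariant, because $\mathcal{F}=\bigcup_f f(\mathcal{C}_S(K)^{(0)})$. Second, $\mathcal{F}$ contains, for every compact subsurface $\Sigma$ in an exhaustion of $S$, a collection of curves filling $\Sigma$ and detecting the ends. $\MapS$-invariance makes this plausible: translates of a nonseparating curve, or of any fixed curve type, accumulate everywhere in $S$.

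\textbf{Step 2: curves force the identity.} We invoke the standard Alexander method for infinite-type surfaces (Hern\'andez--Morales--Valdez). There is a countable collection of curves, a stable Alexander system, such that any mapping class fixing each curve in it is the identity. For surfaces of positive complexity other than the small exceptional cases excluded by the hypotheses, this holds without the hyperelliptic ambiguity. The system can be chosen to consist of curves of a single type in the $\MapS$-orbit of a curve essential in $K$. If we only control nonseparating curves, we pass through chains of them instead. Hence $g$ acts trivially.

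The main obstacle is Step 1 in the planar or genus-zero case. There, $K$ is a sphere with finitely many boundary components, and the curves essential in translates of $K$ are separating curves of restricted topological types. One must check that these types still form an Alexander system. As a first attempt, we would use that translates of $K$ are nondisplaceable, hence pairwise intersecting, and argue that they cover neighborhoods of all ends. Then every curve cutting off a single boundary-pair region of some translate is fixed, and pairs of such curves intersecting minimally pin down $g$ on each compact piece of an exhaustion.
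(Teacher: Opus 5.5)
Your proposal has a genuine gap: neither step shows that the set $\mathcal{F}$ of curves fixed by $g$ is large enough to apply the Alexander method.

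\begin{itemize}
\item Step 1 rests on claims you yourself mark as heuristic (``plausible'', ``enough types''), and you leave the genus-zero case open.
\item The fallback of enlarging $K$ is not available. The space $\mathbb{X}=\mathcal{C}(\mathbf{Y}_K)$ is built from the given $K$, so changing $K$ changes the action under study.
\item In Step 2, the assertion that a stable Alexander system can be chosen inside a single $\MapS$-orbit of a curve essential in $K$ is not what Hern\'andez--Morales--Valdez prove. It is exactly the point that would need an argument.
\item A family of fixed curves that merely fills each compact piece of an exhaustion does not by itself force $g$ to be trivial. Fixing a filling collection curve by curve only confines $g$ to a finite stabilizer, and you need a genuine Alexander system.
\end{itemize}

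The missing idea, which the paper uses, is to exploit the $\MapS$-invariance of $\mathcal{F}$ dynamically rather than sorting curves by topological type. This shows $g$ fixes every essential curve $c$ of $S$.

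Take a curve $\alpha$ fixed by $g$ that lies in some translate $K_1$ and satisfies $i(\alpha,c)>0$. The paper uses a boundary component $b$ of $K_1=f(K)$; a curve essential in $K_1$ works just as well. Such an $\alpha$ exists: if $\alpha_0\in\mathcal{F}$ is disjoint from $c$ and not isotopic to it, twist $\alpha_0$ along a curve meeting both $\alpha_0$ and $c$.

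For every $N$, $T_c^N(K_1)$ is again a translate of $K$, so $T_c^N(\alpha)\in\mathcal{F}$ is fixed by $g$. Hence $T_{gc}^N(\alpha)=g\,T_c^N(\alpha)=T_c^N(\alpha)$ for all $N$. Comparing intersection numbers with suitable auxiliary curves as $N\to\infty$ forces $gc=c$.

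Once every curve is fixed, the Alexander method for infinite-type surfaces gives $g=e$. This route needs no case analysis of which curve types occur in $K$, no separate planar case, and no enlargement of $K$.
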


\begin{proof}
Let $K \subset S$ be a nondisplaceable subsurface of finite type and $\mathbb{X}=\mathcal{C}(\mathbf{Y}_K)$ be the hyperbolic space obtained from the orbit of $K$ by the projection complex machinery in \cite{HQR20}. Assume that $g \in \MapS$ satisfies 
$g \cdot x = x$ for every vertex $x \in \mathbb{X}.$
The vertices of $\mathbb{X}$ are given by the pairs $(c, K)$ for $c \in \mathcal{C}_{S}(K)$, $\mathcal{C}_{S}(K) \in \mathbf{Y}$. By the projection axiom, the boundary components of any two distinct finite-type nondisplaceable subsurfaces either intersect essentially or are nested (i.e., one is contained in the other). Consequently, $g$ fixes all vertices corresponding to the boundary curves. Thus $g$ preserves the homotopy class of every boundary component of such subsurface.

Claim that $g$ fixes all essential simple closed curves $c$ in $S$. Let $c$ be an essential simple closed curve in $S$. Choose \(f\in\MapS\) so that \(K_1=f(K)\) has a boundary component \(b\subset \partial K_1\) with the number of intersection \(i(b,c)>0\). 
For each \(N\in\mathbb Z\), make Dehn twists along $c$ for $N$ times and set
\[
K_N:=T_c^N(K_1),\qquad b_N:=T_c^N(b).
\]
Then \(K_N\in\MapS \cdot K\) and \(b_N\subset\partial K_N\).
So \(g\) fixes every boundary component of every \(K_N\), hence \(g b_N=b_N\) for all \(N\).
On the other hand,
\[
g b_N=g\,T_c^N(b)=T_{gc}^{\,N}(gb)=T_{gc}^{\,N}(b).
\]
Therefore, for all \(N\),
\[
T_{gc}^{\,N}(b)=T_c^N(b).
\]
So \(gc=c\).

Hern{\'a}ndez, Morales and Valdez\cite{HMV19} showed that for an infinite-type surface $S$, any orientation-preserving homeomorphism that preserves the isotopy classes of essential arcs and simple closed curves is isotopic to the identity. 
\end{proof}


\subsection{The property \texorpdfstring{$\pn^{\mathbb{X}}$}{P {naive} X} for big mapping class groups}

The present author \cite{TL24} proved that the $\MapS$ has the property $\pn$ when $S$ contains a nondisplaceable connected subsurface of finite type using this action. Given a finite collection $F=\{h_1,\dots,h_n\}\subset \operatorname{Map}(S)\setminus \{e\}$, she tailors the space $\mathbb{X}$ to capture the dynamics of those mapping classes. Notably, this construction of $\mathbb{X}$ depends on the chosen subset $F$. In essence, for each new finite set of mapping classes, she gets a new hyperbolic $\mathbb{X}$ adapted to them. This flexibility is crucial for applying a ping-pong argument in a naive setting.

We next strengthen the above by establishing $\pn^{\mathbb{X}}$ for using a uniform hyperbolic space $\mathbb{X}$ and a fixed action of $\MapS$, rather than an $F$-dependent family $\mathbb{X}_{F}$.

\begin{theorem}\label{TL1}
Let $S$ be a connected orientable surface with positive complexity, and assume that $S$ contains a nondisplaceable connected subsurface of finite type. Then $\MapS$ has the property $P_{\text{naive}}^{\mathbb{X}}$.
\end{theorem}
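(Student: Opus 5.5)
We must show: for every finite $F\subset\MapS\setminus\{e\}$ there is a single $g\in\MapS$, loxodromic on the fixed space $\mathbb{X}=\mathcal{C}(\mathbf{Y}_K)$, such that for each $h\in F$ we have $\langle h,g\rangle\cong\langle h\rangle*\langle g\rangle$ and every elliptic subgroup of $\langle h,g\rangle$ is conjugate into $\langle h\rangle$. The plan is a ping-pong argument in $\mathbb{X}$ and its Gromov boundary, using a loxodromic $g$ chosen so that its fixed points at infinity avoid the dynamics of every $h\in F$.

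\textbf{Step 1: producing many independent loxodromics.} Since the action of $\MapS$ on $\mathbb{X}$ is nonelementary (Horbez--Qing--Rafi), there are infinitely many pairwise independent loxodromic elements. A concrete source is a partial pseudo-Anosov $\phi$ supported on some $K_1\in\mathbf{Y}_K$; it acts loxodromically on $\mathcal{C}_S(K_1)$, hence on $\mathbb{X}$ by the Bestvina--Bromberg--Fujiwara distance formula. Conjugating $\phi$ by elements of $\MapS$ gives loxodromics $u\phi u^{-1}$ with fixed point pairs $u\cdot\{\phi^{\pm}\}\subset\partial\mathbb{X}$. Faithfulness (Lemma~\ref{a}) guarantees that no $h\in F$ acts trivially on $\mathbb{X}$.

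\textbf{Step 2: choosing $g$ generic with respect to $F$.} For each $h\in F$ the goal is to choose $u$ so that the pair $\{u\phi^{+},u\phi^{-}\}$ is disjoint from $h^{m}\{u\phi^{+},u\phi^{-}\}$ for every $m\neq0$ with $h^m\neq e$, and also disjoint from the limit set of $\langle h\rangle$ when $h$ is loxodromic or parabolic. One would exploit the fact that $\MapS$ acts minimally on the limit set, which is infinite, so that the set of bad $u$ is small. This is the main obstacle: for elliptic $h$ of infinite order, infinitely many powers $h^m$ must be controlled simultaneously. I would first try to replace powers by uniform geometry. One picks a vertex $o$ and ping-pong neighbourhoods $U^{\pm}$ of $u\phi^{\pm}$, and then for elliptic $h$ uses that $h$ moves some vertex of $K_1$'s curve graph, so that $h$ maps $U^{+}\cup U^{-}$ into a set disjoint from it. The disjointness must hold for all nontrivial powers, and one would prove this using the projection estimates (P1)--(P3). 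If $h$ is loxodromic, one instead chooses $g$ independent of $h$ and replaces $g$ by a large power.

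\textbf{Step 3: ping-pong and elliptic subgroups.} Given such $g$, after replacing it by a sufficiently high power $g^{N}$ (still loxodromic), it maps $\mathbb{X}\setminus U_g$ into $U_g$, where $U_g$ is a small neighbourhood of its fixed points. Meanwhile every nontrivial power of $h$ maps $U_g$ into a region $V_h$ disjoint from $U_g$, and the ping-pong lemma then yields $\langle h,g^N\rangle\cong\langle h\rangle*\langle g^N\rangle$. A single $N$ works for all $h\in F$ because $F$ is finite. For the statement about elliptic subgroups, a reduced word containing a nontrivial power of $g^N$ that is cyclically reduced acts with north--south dynamics and has translation length bounded below, so it is loxodromic. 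Hence an elliptic subgroup contains no such element up to conjugacy, and by Kurosh's theorem every elliptic subgroup of the free product is conjugate into $\langle h\rangle$ or $\langle g\rangle$; the second case is impossible since $g$ is loxodromic. Finally, rename $g^N$ as $g$.
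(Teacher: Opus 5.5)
Your Step~2 is where the proof breaks, and Step~3 depends on it. You never prove that \emph{every} nontrivial power $h^m$ of an infinite-order elliptic $h$ sends $U_g$ into a set disjoint from $U_g$. The mechanism you suggest, that $h$ moves some vertex of the curve graph of $K_1$, says nothing about powers of $h$.

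Here is a concrete failure. Let $S=\Sigma_2\setminus C$ with $C$ a Cantor set, and let $K$ be a genus-two subsurface with one boundary curve; it is nondisplaceable. Take an order-$5$ symmetry of the closed genus-two surface fixing a point, restrict it to the complement of a small invariant disc, and extend it across a collar of $\partial K$ by a fractional twist. The resulting $h\in\MapS$ moves curves of $K$, but $h^5=T_{\partial K}^{k}$ with $k\neq 0$. Consequently:
\begin{itemize}
\item $h$ has infinite order;
\item $h^5$ fixes the copy of $\mathcal{C}_S(K)$ in $\mathbb{X}$ pointwise, hence fixes $g^{\pm}$ for every $g$ supported in $K$;
\item $h^5$ commutes with every such $g$, so $\langle h,g^N\rangle\ncong\langle h\rangle*\langle g^N\rangle$ for every partial pseudo-Anosov $g$ on $K$ and every $N$.
\end{itemize}
Yet $K_1=K$ passes your test. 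The translate $uK$ must therefore be chosen to control all powers of $h$ simultaneously, and a single $u$ must do this for all $h\in F$. You give no argument that such a $u$ exists. Minimality of the action on the limit set gives dense orbits; it does not make the set of bad $u$ small.

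Even pointwise disjointness would not be enough. Suppose $h^m\{g^+,g^-\}\cap\{g^+,g^-\}=\varnothing$ for each $m\neq 0$ separately. Ping-pong still needs one neighbourhood $U_g$ valid for all $m$ at once. So you must exclude powers $h^m$ that coarsely fix longer and longer segments of the axis of $g$ as $|m|\to\infty$.

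In an acylindrical action this uniformity would come from acylindricity. The HQR action is not acylindrical: the pointwise stabiliser of two vertices $(c,K_1),(c',K_2)$ contains every mapping class supported off $K_1\cup K_2$. Appealing to (P1)--(P3) does not replace an actual argument.

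The paper does not prove this uniform control for elliptic $h$ either. It imports it from \cite[Theorem~1]{TL24} and \cite[Proposition~3.5]{TL24}. What remains is combining loxodromics, which it does by induction on $|F|$:
\begin{itemize}
\item form $g_{n-1}^{l}g_{f_{h_n}}^{l}$;
\item use Lemma~\ref{product} to place its fixed points near those of the factors;
\item check that no $h_i$ preserves or flips $\{g^+,g^-\}$.
\end{itemize}

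Your treatment of loxodromic $h$ and your Kurosh argument for the elliptic-subgroup clause are fine once a quantitative ping-pong is available. Without the missing step, however, the proposal does not prove the theorem. The example above also shows that nontriviality of $h$ on $\mathcal{C}_S(K_1)$ cannot by itself be the criterion for choosing $K_1$. That bears on the paper's reduction as written as well.
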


To prove this theorem, we need the following lemma.

\begin{lemma}[\cite{AD18}]\label{product}
Let $X$ be a geodesic $\delta$-hyperbolic space,
and $g_{1}$, $g_{2}$ two loxodromic isometries of $X$ with disjoint pairs of fixed points
in $\partial{X}$. Let $U$ be a neighborhood of $g_{1}^{+}$ and $V$ be a neighborhood of $g_{2}^{-}$.
Then, for sufficiently large $l$ and $k$, $g_{1}^{l}g_{2}^{k}$ is a loxodromic isometry whose repelling fixed point is in $V$ and whose attracting fixed point is in $U$.
\end{lemma}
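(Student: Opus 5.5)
The plan is a ping-pong argument on the Gromov bordification $\overline{X}=X\cup\partial X$, made quantitative by Gromov products. We do not assume $X$ is proper, since the application to $\mathbb{X}$ is not locally finite. Fix a base point $x_0\in X$. For $\xi\in\partial X$ and $R>0$, write $N_R(\xi)=\{z\in\overline{X} : (z\mid\xi)_{x_0}>R\}$. These sets form a neighbourhood basis of $\xi$ in $\overline{X}$.

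\textbf{Setting up the ping-pong sets.} The four points $g_1^{\pm},g_2^{\pm}$ are pairwise distinct: $g_1^+\neq g_1^-$ because $g_1$ is loxodromic, and the two pairs are disjoint by hypothesis. So we can choose $R$ large enough that the four sets $U_+=N_R(g_1^+)$, $U_-=N_R(g_1^-)$, $V_-=N_R(g_2^-)$, $V_+=N_R(g_2^+)$ are pairwise disjoint, with $U_+\subset U$ and $V_-\subset V$. We also ask that each of them stays a definite distance (of order $R-C\delta$) away from the others in the Gromov product sense.

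The key input is the north--south dynamics of a loxodromic isometry. For every $R'>R$ there is $L$ such that for $l\ge L$,
\[
g_1^{l}\bigl(\overline{X}\setminus U_-\bigr)\subset N_{R'}(g_1^+),\qquad g_1^{-l}\bigl(\overline{X}\setminus U_+\bigr)\subset N_{R'}(g_1^-),
\]
and the same holds for $g_2$ with $V_\pm$ exchanged appropriately. I would prove this directly from the fact that $n\mapsto g_1^n x_0$ is a quasi-geodesic with endpoints $g_1^\pm$. If $(z\mid g_1^-)_{x_0}\le R$, then the geodesic from $x_0$ to $z$ leaves the quasi-axis near its start. Translating by $g_1^l$ then forces $(g_1^l z\mid g_1^+)_{x_0}\gtrsim \tfrac{l}{2}\tau(g_1)-R-C\delta$, where $\tau(g_1)>0$ is the stable translation length of $g_1$.

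\textbf{Ping-pong for $h=g_1^lg_2^k$.} Take $l,k$ large. Since $U_-$ is disjoint from $V_+$,
\[
h\bigl(\overline{X}\setminus V_-\bigr)\subset g_1^l(V_+)\subset g_1^l\bigl(\overline{X}\setminus U_-\bigr)\subset N_{R'}(g_1^+).
\]
Symmetrically, $h^{-1}(\overline{X}\setminus U_+)\subset N_{R'}(g_2^-)$. The point $x_0$ lies in none of the four sets, so the iterates $h^n x_0$ all lie in $N_{R'}(g_1^+)$ for $n\ge1$, and $h^{-n}x_0$ lie in $N_{R'}(g_2^-)$.

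To get loxodromicity I would track displacement rather than just location. Using the Gromov-product estimate and the $\delta$-thin triangle inequality inductively, I expect
\[
d(x_0,h^nx_0)\ge n\bigl(l\,\tau(g_1)+k\,\tau(g_2)-C'(R,\delta)\bigr).
\]
This is linear growth once $l,k$ are large, so $h$ has positive stable translation length and is loxodromic. Its attracting fixed point is $\lim h^n x_0$, which lies in $\overline{N_{R'}(g_1^+)}\cap\partial X\subset U$. Its repelling fixed point is $\lim h^{-n}x_0$, which lies in $V$.

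\textbf{Main obstacle.} The delicate step is the linear lower bound on $d(x_0,h^nx_0)$ in a non-proper space. Knowing only that orbit points stay in a boundary neighbourhood would not exclude parabolic behaviour. I would handle it by showing that the broken path through $x_0,\ g_1^l x_0,\ g_1^lg_2^k x_0,\dots$ is a local quasi-geodesic. Each break has Gromov product at most $R+C\delta$, because the directions toward $g_1^-$ and $g_2^+$ are separated. Each segment has length at least $\min\bigl(l\,\tau(g_1),\,k\,\tau(g_2)\bigr)-C$. The local-to-global principle for quasi-geodesics in $\delta$-hyperbolic spaces then yields the linear growth.
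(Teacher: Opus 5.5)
Your proof is correct. The paper itself gives no argument for this lemma: it is quoted from Abbott--Dahmani \cite{AD18} and used as a black box in the proof of Theorem~\ref{TL1}. What you propose is the standard self-contained proof. North--south dynamics on $\overline{X}$ locates the fixed points, and a broken-geodesic (local-to-global) estimate gives positive stable translation length. Its real merit over the bare citation is that you check explicitly that properness is never used, which is exactly the situation of the non-locally-finite quasi-tree $\mathbb{X}$.

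When you write it out, tighten two points.

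First, the Gromov products at the break points alternate. At $h^{n}g_1^{l}x_0$ it is $(g_1^{-l}x_0\mid g_2^{k}x_0)_{x_0}$, but at $h^{n}x_0$ it is $(g_2^{-k}x_0\mid g_1^{l}x_0)_{x_0}$. So you need the separation of both pairs $\{g_1^-,g_2^+\}$ and $\{g_2^-,g_1^+\}$, not only the first one you mention. Both hold because the four points are distinct, and they give a bound $B$ independent of $l,k$. You then take $l\tau(g_1)$ and $k\tau(g_2)$ larger than $2B+C\delta$.

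Second, the extended Gromov product is only defined up to an additive $O(\delta)$. Hence the boundary closure of $N_{R'}(g_1^+)$ is only contained in $\{z : (z\mid g_1^+)_{x_0}\ge R'-C\delta\}$. Choose $R'\ge R+C\delta$ to conclude that the fixed points land in $U$ and $V$.

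Finally, your worry about parabolics is slightly overstated. The two-sided ping-pong already excludes a parabolic $h$: its forward and backward orbits would converge to the same boundary point, which cannot lie in both of the disjoint closures. What the displacement estimate really rules out is bounded orbits, and since it gives positive stable translation length directly, it conveniently settles both cases at once.
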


\begin{proof}[Proof of Theorem \ref{TL1}.]
Assume that $S$ contains a nondisplaceable connected finite type subsurface $K$. Let $\mathbb{X}=\mathcal{C}(\mathbf{Y}_K)$ be the hyperbolic space obtained from the orbit of $K$ by the projection complex machinery in \cite{HQR20}. By construction, $\mathbb{X}$ is a quasi-tree of curve graphs, and the action of $\MapS$ on $\mathbb{X}$ is continuous, non-elementary, and faithful.

Let $F_{n}=\{h_1,\dots,h_n\}\subset \MapS \setminus\{e\}$ be a finite subset. 
We first prove the theorem for $n=1$, i.e, $F_{1}=\{h_{1}\}$, and then use induction on $n$ to obtain the rest of the cases. Since $h_{1}$ is nontrivial and the action of $\operatorname{Map}(S)$ on $\mathbb{X}$ is faithful, there exists a mapping class $f_{h_1}\in\MapS$ such that $f_{h_1}(K)=K_{1}$ and 
$K_{1}\cap \operatorname{Supp}(h_1)\neq\varnothing$.
It implies that $h_1|_{\mathcal{C}_{S}(K_{1})}\neq \mathrm{Id}$.
 Then by \cite[Theorem 1]{TL24}, there exists a $K_{1}$-pseudo-Anosov element $g_{1}\in \MapS$ such that the subgroup $\langle h_1,g_{1}\rangle$ is isomorphic to the free product $\langle h_1\rangle * \langle g_{1}\rangle$. By \cite[Proposition 3.5]{TL24}, the only elliptic subgroups of $\langle h_1,g_{1}\rangle$ are conjugate into a subgroup of $\langle h_1\rangle$. Therefore, $\MapS$ has property $P_{\text{naive}}^{\mathbb{X}}$ for $F_{1}$.

Now we consider $n=2$, i.e., $F_{2}=\{h_{1}, h_{2}\}$. If there exists $f\in \MapS$ such that
\[
f(K)\cap \operatorname{Supp}(h_1)\neq\varnothing
\quad\text{and}\quad
f(K)\cap \operatorname{Supp}(h_2)\neq\varnothing,
\]
then both $h_1$ and $h_2$ act nontrivially on the same finite type subsurface $f(K)$. By \cite[Theorem 1]{TL24}, there exists a $f(K)$-pseudo-Anosov element $g\in \MapS$ to play ping-pong with $h_{1}$ and $h_{2}$. We are done. 

Otherwise, there exist two mapping classes $f_{h_1},f_{h_2}\in\MapS$ such that
\[
f_{h_1}(K)\cap \operatorname{Supp}(h_1)\neq\varnothing
\quad\text{and}\quad
f_{h_2}(K)\cap \operatorname{Supp}(h_2)\neq\varnothing.
\]
It implies that
\[
h_1|_{\mathcal{C}_{S}(f_{h_1}(K))}\neq \mathrm{Id}
\quad\text{and}\quad
h_2|_{\mathcal{C}_{S}(f_{h_2}(K))}\neq \mathrm{Id}.
\]

By \cite[Theorem~2.9]{HQR20}, there exist $f_{h_1}(K)$-pseudo-Anosov element $g_{f_{h_1}}$ and $f_{h_2}(K)$-pseudo-Anosov element $g_{f_{h_2}}$, respectively, which are WWPD loxodromic for the action on $\mathbb{X}$. By \cite[Section 4]{TL24}, the large power of $g_{f_{h_1}}$ plays ping-pong with $h_1$ and the large power of $g_{f_{h_2}}$ plays ping-pong with $h_2$.

We define $g(m) := g_{f_{h_1}}^{m}g_{f_{h_2}}^{m}$ for a large integer $m$. By Lemma \ref{product}, $g$ is loxodromic element whose attracting fixed point lies in a small neighborhood of $g_{f_{h_1}}^{+}$ and whose repelling fixed point lies in a small neighborhood of $g_{f_{h_2}}^{-}$ in $\partial\mathbb{X}$. 

We claim that $h_1, h_{2}\in F$ do not preserve or flip the unordered pair of fixed points of $g$ on $\partial\mathbb{X}$, i.e.,
\[
h_i\notin \operatorname{Stab}\{g^+,g^-\}
\quad\text{for} ~i\in\{1, 2\}.
\]

By \cite[Lemma~3.3]{TL24}, $h_{1}$ does not preserve the fixed point $g_{f_{h_1}}^{+}$. Therefore, $h_{1}$ does not preserve the fixed point $g^{+}$ when $m$ is sufficiently large. Suppose that $h_{1}$ preserves the fixed point $g^{-}$ when $m$ is sufficiently large, then $h_{1}$ fixes the ending lamination on $f_{h_{1}}(K) \cup f_{h_{2}}(K)$. The element $h_{1}$ is the identity restricted to $f_{h_{1}}(K) \cup f_{h_{2}}(K)$. Thus, $h_1$ preserves the fixed point $g_{f_{h_1}}^{+}$, a contradiction. If now $h_{1}(g^{+})=g^{-}$, then $h_{1}$ sends an ending lamination to another ending lamination with the same support. So $h_{1}$ preserves their support $f_{h_{1}}(K) \cup f_{h_{2}}(K)$, a contradiction. Similarly, it can be shown that $h_{2}$ neither preserves nor flips the unordered pair of fixed points of $g$ on $\partial\mathbb{X}$. We have proven the claim.

Therefore, \cite[Theorem 1]{TL24} and \cite[Proposition 3.5]{TL24} show that $\MapS$ has property $P_{\text{naive}}^{\mathbb{X}}$ for $F_{2}$.

By induction on $n$, we obtain the general case. 
Assume that for $F_{n-1}=\{h_1,\dots,h_{n-1}\}$, there exists a loxodromic element $g_{n-1}\in\MapS$ such that a sufficiently large power of $g_{n-1}$ plays ping-pong with each $h_i$ for $1 \leq i\leq n-1$. 
For $h_n$, choose $f_{h_n}\in\MapS$ with $f_{h_n}(K)\cap\operatorname{Supp}(h_n)\neq\varnothing$, and let $g_{f_{h_n}}$ be a $f_{h_n}(K)$-pseudo-Anosov element which is WWPD loxodromic for the action on $\mathbb X$. Then a sufficiently large power of $g_{f_{h_n}}$ plays ping-pong with $h_n$.
Set $g(l):=g_{n-1}^{l}g_{f_{h_n}}^{l}$ for sufficiently large integers $l$. By Lemma~\ref{product}, $g$ is loxodromic, and by \cite[Theorem~1]{TL24} and \cite[Proposition 3.5]{TL24} $\MapS$ has property $P_{\mathrm{naive}}^{\mathbb X}$.
\end{proof}



Combining Proposition \ref{P1} and Theorem \ref{TL1}, the group $\operatorname{Map}(S)$ has the following result.

\begin{corollary}\label{C1}
Let $S$ be a connected orientable surface with positive complexity, and assume that $S$ contains a nondisplaceable connected subsurface of finite type. Then $\MapS$ has the property \(\pa^\mathbb{X}\).    
\end{corollary}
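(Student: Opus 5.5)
The corollary follows by chaining two earlier results, and all we need to check is that the hypotheses of Proposition~\ref{P1} hold for the action of $\MapS$ on the Horbez--Qing--Rafi space $\mathbb{X}=\mathcal{C}(\mathbf{Y}_K)$.

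\textbf{Step 1: the setting matches Proposition~\ref{P1}.} Proposition~\ref{P1} needs four things.
\begin{itemize}
\item[(a)] The group acts by isometries. This holds by the construction of \cite{HQR20}: $\MapS$ permutes the subsurfaces in $\mathbf{Y}_K$ and carries the curve graphs $\mathcal{C}_S(K_1)$ isometrically to one another, compatibly with the projections.
\item[(b)] The space is hyperbolic. This is the cited result of \cite{BBF15, HQR20}: the curve graphs $\mathcal{C}_S(K_1)$ are uniformly hyperbolic (they are all isometric to $\mathcal{C}_S(K)$), so $\mathbb{X}$ is hyperbolic.
\item[(c)] The space is countable. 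As noted before Lemma~\ref{a}, we identify $\mathbb{X}$ with its vertex set. This set is countable because $\mathbf{Y}_K$ is a countable collection of isotopy classes of finite type subsurfaces, and each $\mathcal{C}_S(K_1)$ has countably many vertices.
\item[(d)] The action is faithful. This is exactly Lemma~\ref{a}.
\end{itemize}

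\textbf{Step 2: the naive property.} By Theorem~\ref{TL1}, $\MapS$ has property $\pn^{\mathbb{X}}$ for this same fixed action. Unlike the $F$-dependent spaces of \cite{TL24}, a single space $\mathbb{X}$ serves for every finite set $F$. This uniformity is what Proposition~\ref{P1} requires, because Definition~\ref{ana1} refers to one fixed representation $\pi$ on $\ell^2(\mathbb{X})$.

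\textbf{Step 3: conclusion.} Applying Proposition~\ref{P1} gives property $\pa^{\mathbb{X}}$. Its proof in fact gives the explicit constant $C=2$: for every finite $F\subset\MapS\setminus\{e\}$ we take the loxodromic $g$ supplied by $\pn^{\mathbb{X}}$, and then every $h\in F$ and every $a\in\ell^2(\mathbb{Z}^+)$ satisfy $\|\sum_j a_j\pi(g^{-j}hg^j)\|\le 2\|a\|_2$.

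\textbf{Main point to watch.} The only delicate issue is whether the element $g$ produced in Theorem~\ref{TL1} meets the full requirement of Definition~\ref{naive1}. That definition asks not only for the free product structure but also that the only elliptic subgroups of $\langle h,g\rangle$ be conjugate into $\langle h\rangle$, for each $h\in F$ simultaneously. The proof of Lemma~\ref{free1} uses this elliptic condition for each $h$ separately, so I would confirm that the inductive construction $g=g_{n-1}^l g_{f_{h_n}}^l$, together with \cite[Proposition 3.5]{TL24}, provides the elliptic condition for every $h_i$ at once and not only for $h_n$.

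Granting Theorem~\ref{TL1} as stated, nothing further is needed, and the corollary is immediate.
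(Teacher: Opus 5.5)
Your proposal is correct and follows the paper's route exactly: the paper obtains the corollary in one line by combining Proposition~\ref{P1} with Theorem~\ref{TL1}. Your check of the hypotheses of Proposition~\ref{P1} (isometric action, hyperbolicity, countable vertex set, faithfulness via Lemma~\ref{a}) spells out what the paper leaves implicit, and your caveat about the elliptic condition concerns the proof of Theorem~\ref{TL1} itself, which both you and the paper take as given.
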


By Lemma \ref{a}, Lemma \ref{f} and Lemma \ref{b}, the mapping class group $\MapS$ faithfully act on $\mathbb{X}$ and reduce a faithful action representation $\pi: \MapS \rightarrow \mathcal{U}(\ell^2(\mathbb{X}))$. Therefore, the action-based $C^{*}$-algebra $C^{*}_{\mathbb{X}}\MapS$ of $\MapS$ is the norm closure of $\pi(\mathbb{C}\MapS)$ in $\mathcal{B}(\ell^2(\mathbb{X}))$. By Corollary \ref{C1}, we have an immediate consequence of Theorem \ref{thm:Pana-implies-simple-finite}. 

\begin{corollary}\label{MapSsimple}
Let \(S\) be a connected orientable surface with positive complexity, and assume that
\(S\) contains a nondisplaceable connected subsurface of finite type, then the action-based $C^{*}$-algebra $C^{*}_{\mathbb{X}}\MapS$ is simple.   
\end{corollary}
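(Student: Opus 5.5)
This follows by combining results already established, so the proof only has to check that each hypothesis is in place. First, Lemma \ref{a} shows that the action of \(\MapS\) on the vertex set of \(\mathbb{X}\) is faithful. That vertex set is countable, and \(\mathbb{X}\) is hyperbolic by the Bestvina--Bromberg--Fujiwara/Horbez--Qing--Rafi construction. Lemma \ref{f} then makes the associated action representation \(\pi:\MapS\to\mathcal{U}(\ell^2(\mathbb{X}))\) faithful. Lemma \ref{b} shows that \(\pi\) extends to \(\mathbb{C}\MapS\) by bounded operators. Hence \(C^*_{\mathbb{X}}\MapS\) is well defined as the norm closure of \(\pi(\mathbb{C}\MapS)\), and by Lemma \ref{unit} it is a unital \(C^*\)-algebra.

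Second, I will invoke Theorem \ref{TL1} to get property \(\pn^{\mathbb{X}}\) for \(\MapS\), with respect to the single fixed space \(\mathbb{X}\) and not an \(F\)-dependent family. Proposition \ref{P1} turns this into property \(\pa^{\mathbb{X}}\); this is Corollary \ref{C1}. Third, I will apply Theorem \ref{thm:Pana-implies-simple-finite} with \(G=\MapS\) and \(X=\mathbb{X}\). Given a nonzero closed ideal, we approximate a nonzero element by a finite sum and normalize one coefficient to the identity. The Powers-type averaging of Lemma \ref{lem:averaging-finite-sum} then forces a nonzero scalar multiple of \(Id\) into the ideal, so the ideal is everything.

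The main obstacle is upstream, in Theorem \ref{TL1}. That theorem has to produce, for an arbitrary finite \(F\), one loxodromic \(g\) for the fixed action on \(\mathbb{X}\) such that each \(\langle h,g\rangle\) is a free product whose elliptic subgroups are conjugate into \(\langle h\rangle\). The free-product condition is exactly what Lemma \ref{free1} needs. At this stage I take Theorem \ref{TL1} as given.

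One detail deserves a check. The proof of Theorem \ref{thm:Pana-implies-simple-finite} uses only finite subsets of \(G\) and treats \(G\) as discrete. The Polish topology on \(\MapS\) is therefore irrelevant: the algebra is generated by the operators \(\pi(g)\) without reference to any topology on the group. No continuity of the action is needed for this corollary.
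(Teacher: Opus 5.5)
You follow the paper's own chain exactly (Lemmas \ref{a}, \ref{f}, \ref{b}, then Corollary \ref{C1}, then Theorem \ref{thm:Pana-implies-simple-finite}). Your proof therefore stands or falls with Theorem \ref{thm:Pana-implies-simple-finite}, and that is where the genuine gap lies, not in Theorem \ref{TL1}.

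In that proof the tolerance $\varepsilon$ must be fixed before the finite sum $T$ is chosen. Yet the requirement $\varepsilon<|a_k|/4$ refers to a coefficient of $T$, and nothing bounds $|a_k|$ from below independently of $\varepsilon$. Concretely, averaging with Lemma \ref{lem:averaging-finite-sum} gives $|a_e|\le\|\pi(T)\|$ for every finite sum $T=\sum_h a_h\pi(h)$. Hence the $\pi(h)$ are linearly independent, and ``coefficient of $e$'' extends to a tracial state $\tau$ on $C^*_XG$ with $\tau(S^*T)=\sum_h a_h\overline{b_h}$ for $S=\sum_h b_h\pi(h)$. Now suppose $x\neq0$ satisfies $\tau(\pi(k)^*x)=0$ for all $k$. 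Then every approximant with $\|x-T\|<\varepsilon$ has $|a_k|=|\tau(\pi(k)^*(T-x))|<\varepsilon$ for all $k$, so $\varepsilon<|a_k|/4$ can never be arranged. What the argument really needs is that $\tau$ be faithful.

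That fails here. The formula above says the GNS representation of $\tau$ is the left regular representation $\lambda$ of $G$ viewed as a discrete group. So $\pi(T)\mapsto\lambda(T)$ extends to a surjection $C^*_XG\to C^*_r(G)$. Its kernel is a proper closed ideal, and it is zero only if the map is isometric, i.e.\ only if $\pi\prec\lambda$. Write $\pi=\bigoplus_x\lambda_{G/G_x}$ over orbit representatives and restrict to $G_x$: this subgroup fixes $\delta_x$, while $\lambda|_{G_x}$ is a multiple of $\lambda_{G_x}$. Hence $\pi\prec\lambda$ forces every point stabiliser $G_x$ to be amenable.

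For $\MapS$ acting on $\mathbb{X}$, the stabiliser of a vertex $(c,K_1)$ contains every mapping class supported in $S\setminus K_1$. Since $S$ has infinite type, this complement has an infinite-type component, so it contains two curves with intersection number at least two, whose Dehn twists generate a free group of rank two. Thus, if $\MapS$ has $\pa^{\mathbb{X}}$ as Corollary \ref{C1} asserts, then $C^*_{\mathbb{X}}\MapS$ has a nonzero proper closed ideal. The two results you combine are incompatible, and no work on Theorem \ref{TL1} can rescue this route.

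A smaller point: Lemma \ref{free1} needs the elliptic-subgroup clause, not just the free-product structure. Even then, one must choose orbit representatives whose $\langle h,g\rangle$-stabilisers lie in $\langle h\rangle$ itself, since e.g.\ $ghg^{-1}$ is elliptic and involves $g$.
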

Moreover, by Proposition \ref{Trace} the action-based $C^{*}$-algebra $C^{*}_{\mathbb{X}}\MapS$ admits a unique normalized trace.

\section{A tentative for a framework based on coarsely bounded subsets}
Although a Polish group need not be locally compact, hence lacks Haar measure and compact-set testing, CB subsets still provide the right notion of ``bounded pieces'' for large-scale dynamics and operator-algebraic approximation. In this section, we study the properties $\pn$ and $\pa$ in the CB setting. Even though we don't have a $C^*$-algebra candidate for simplicity, those properties could still be interesting on their own. We replace finite subsets of $G$ by coarsely bounded subsets and develop a CB variant of properties $\pn$ and $\pa$. 


First, let us recall the following global notion of coarsely bounded subsets for topological groups, which is stated in terms of bounded orbits for isometric actions.
\begin{definition}[\cite{RC21}]\label{CB} 
A subset \(A\) of a topological group \(G\) is
\emph{globally coarsely bounded} in \(G\), or simply CB, if for every continuous action
of \(G\) on a metric space \(X\) by isometries, the orbit \(A \cdot x\) is bounded for all
\(x \in X\). A group \(G\) is said to be \emph{locally} CB if it admits a CB neighborhood
of the identity, and CB-\emph{generated} if a CB set generates it.
\end{definition}




Now, we consider the generalized property $\pn$ and $\pa$ on the CB subset of $G$.

\begin{definition}\label{ana}
Let $G$ be a group acting on a countable metric space $X$ by isometries and let $\pi: G \rightarrow \mathcal{U}(\ell^2(X))$ be the associated action representation. We say that \(G\) satisfies \emph{property \(\pa^{X, \text{CB}}\)} if for every CB subset \(Q \subseteq G \setminus \{e\}\), there exist an element \(g \in G\) and a constant \(C > 0\) such that for any \(h \in Q\) and any $a \in \ell^2(\mathbb{Z}^+)$, we have: 
\[
\left\| \sum_{j=1}^{\infty} a_j \pi(g^{-j} h g^j) \right\|_{\mathrm{op}} \leq C \left\| a \right\|_{\ell^{2}}.
\]
Here $a_{j}$ is the $j$th-term of the sequence $a$ and $\mathbb{Z}^{+}$ denote the set of positive integers.
\end{definition}

\begin{definition}\label{naive}
    A group $G$ acting on a countable hyperbolic metric space $X$ is said to have \emph{property $\pn^{X, \text{CB}}$} if, for any CB subset \(Q \subseteq G \setminus \{e\}\), there exists an loxodromic element \(g \in G\) such that, for every \(h \in Q\), the subgroup \(\left\langle h, g\right\rangle\) of \(G\) is isomorphic to the free product \(\langle h\rangle *\left\langle g\right\rangle\) and the only elliptic subgroups of \(\left\langle h, g\right\rangle\) are conjugated into \(\langle h\rangle\).
\end{definition}

Using the same method, we will show that if a group $G$ acting on a countable metric space $X$ has $\pn^{X, \text{CB}}$, then it has \(\pa^{X, \text{CB}}\). 

\begin{proposition} \label{P2}
Let \(G\) be a group faithfully acting on a countable hyperbolic metric space $X$ by isometries and assume $G$ has property $\pn^{X, \text{CB}}$. Then $G$ satisfies the property \(\pa^{X, \text{CB}}\).
\end{proposition}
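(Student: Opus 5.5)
The plan is to run the argument of Proposition \ref{P1} verbatim, with the finite set $F$ replaced by the CB subset $Q$. The point is that nothing in that proof used finiteness of $F$ beyond having a single element $g$ that works for every $h\in F$. Property $\pn^{X,\text{CB}}$ supplies exactly this for a CB set. So, given a CB subset $Q\subseteq G\setminus\{e\}$, I first invoke Definition \ref{naive} to obtain one loxodromic $g\in G$ such that, for every $h\in Q$, $\langle h,g\rangle\cong\langle h\rangle*\langle g\rangle$ and every elliptic subgroup of $\langle h,g\rangle$ is conjugate into $\langle h\rangle$.

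Next I fix an arbitrary $h\in Q$ and set $H=\langle h,g\rangle$. As before, $W_0\subset H$ is the set of reduced words not beginning with a nontrivial power of $g$, and $W_j=g^jW_0$. Lemma \ref{free1} only uses the free product structure and the elliptic-subgroup condition for this particular pair $(h,g)$, so it applies unchanged. It gives orbit representatives $x_i$ with $X=\bigsqcup_i H\cdot x_i$ such that, on each orbit $O_i$, the sets $W_j\cdot x_i$ ($j\in\mathbb Z$) are pairwise disjoint. Then $\ell^2(X)=\bigoplus_i\ell^2(O_i)$, and on each summand the projections $\chi_{W^i_j}$ are mutually orthogonal. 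This yields $\sum_j\|\sum_i\chi_{W^i_{-j}}\xi^i\|^2\le\|\xi\|^2$.

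Then I repeat the ping-pong estimate. For each $j$,
\[
|\langle\pi(g^{-j}hg^j)\xi,\eta\rangle|\le\Big\|\sum_i\chi_{W^i_{-j}}\xi^i\Big\|\,\|\eta\|+\|\xi\|\,\Big\|\sum_i\chi_{W^i_{-j}}\eta^i\Big\|,
\]
using $h(H\setminus W_0)\subseteq W_0$ and $\chi_{W_\alpha}\pi(g^j)=\pi(g^j)\chi_{W_{\alpha-j}}$. Summing against $a\in\ell^2(\mathbb Z^+)$ and applying Cauchy--Schwarz gives $\|\sum_j a_j\pi(g^{-j}hg^j)\|\le 2\|a\|_{\ell^2}$.

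The only step needing care is uniformity over $Q$, and I expect it to be the main obstacle. The constant $C$ must not depend on $h\in Q$, and $Q$ may be uncountable. In the finite case this was harmless; here I will check that the constant produced is the absolute constant $C=2$. It comes only from orthogonality of the projections and unitarity of $\pi$, not from any geometry of $h$ or from the size of $Q$. Since $g$ is chosen once for all of $Q$, taking $C=2$ gives property $\pa^{X,\text{CB}}$. Faithfulness of the action is only needed so that $\pi$ is a faithful representation (Lemma \ref{f}); it plays no role in the estimate.
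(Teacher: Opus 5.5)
Your proposal is correct and follows the paper's own argument: the paper simply says the proof is identical to that of Proposition \ref{P1}, and your observation that the constant is the absolute $C=2$, with $g$ supplied once for all $h\in Q$ by $\pn^{X,\text{CB}}$, is exactly why replacing finite sets by CB sets costs nothing. One caution concerns the disjointness you import from Lemma \ref{free1}: it does not hold for arbitrary orbit representatives, which is how that lemma's proof picks them (if $h\cdot y=y$ and $x=g^m\cdot y$ with $m\neq 0$, then $g^j\cdot y$ lies in both $W_j\cdot x$ and $W_{j-m}\cdot x$), so you should choose each $x_i$ in its $H$-orbit with $\operatorname{Stab}_H(x_i)\subseteq\langle h\rangle$; this is possible because stabilizers are elliptic and hence conjugate into $\langle h\rangle$, and then $v^{-1}g^{j-k}u\in\langle h\rangle$ with $u,v\in W_0$ and $j\neq k$ is impossible, since its reduced form contains the syllable $g^{j-k}$.
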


To prove this proposition, we also have to give the following lemma with the same setup as Lemma \ref{free1}.
\begin{lemma}\label{free}
Let $G$ be a group faithfully acting on a countable hyperbolic space $X$ by isometries and assume $G$ has the property $\pn^{X, \text{CB}}$. 
Then there exists a countable set $\{x_i\}_{i\ge0}\subset X$ (depending on $h$) such that for every $i$ and $j\neq k$,
\[
 (W_j\cdot x_i)\cap (W_k\cdot x_i)=\varnothing .
\]
\end{lemma}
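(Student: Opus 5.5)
The plan is to repeat the argument of Lemma~\ref{free1} essentially verbatim. The only change is that the loxodromic element $g$ now comes from property $\pn^{X,\text{CB}}$ applied to a CB subset $Q\subseteq G\setminus\{e\}$, rather than from $\pn^{X}$ applied to a finite set. Fix such a $Q$ and let $g$ be the element given by Definition~\ref{naive}. Fix $h\in Q$ and set $H=\langle h,g\rangle\cong\langle h\rangle*\langle g\rangle$. As before, $W_0\subset H$ consists of the reduced words whose first syllable is not a nontrivial power of $g$, and $W_j=g^jW_0$. Since the whole statement concerns a single $h$ at a time, the CB hypothesis enters only through the choice of $g$. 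This is why the points $x_i$ are allowed to depend on $h$.

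\emph{Step 1: choose orbit representatives.} Since $X$ is countable, enumerate $X$ and choose $x_0,x_1,\dots$ inductively, each time taking $x_{i}$ to be the first point outside $O_0\cup\dots\cup O_{i-1}$, where $O_k=H\cdot x_k$. Then $X=\bigsqcup_i H\cdot x_i$, the orbits are pairwise disjoint, and each is $H$-invariant. This gives the countable set $\{x_i\}_{i\ge0}$ (which is finite if there are only finitely many orbits).

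\emph{Step 2: disjointness.} Fix $i$ and suppose $(g^ju)\cdot x_i=(g^kv)\cdot x_i$ with $u,v\in W_0$ and $j\neq k$. Then $w:=v^{-1}g^{j-k}u$ lies in $K_i=\operatorname{Stab}_H(x_i)$.

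The point needing care is to check that $w$ is not conjugate into $\langle h\rangle$. Since neither $u$ nor $v$ begins with a nontrivial power of $g$, the word $v^{-1}g^{j-k}u$ is already reduced as written, and it contains the nontrivial syllable $g^{j-k}$. I would then argue via cyclic reduction in the free product. An element of $\langle h\rangle*\langle g\rangle$ is conjugate into $\langle h\rangle$ exactly when its cyclically reduced form is a single syllable in $\langle h\rangle$. Cyclically reducing $v^{-1}g^{j-k}u$ cancels matching outer syllables of $v^{-1}$ and $u$. The middle syllable $g^{j-k}$ survives, or else it is merged into a cyclically reduced word that still contains a $g$-syllable or is a nontrivial power of $g$. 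In either case the result is not conjugate into $\langle h\rangle$. (If $u=v$, then $w=u^{-1}g^{j-k}u$ is conjugate to $g^{j-k}$, which is loxodromic and so cannot fix $x_i$.)

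On the other hand, $\langle w\rangle\subseteq K_i$ fixes $x_i$, so it is an elliptic subgroup of $H$. By Definition~\ref{naive}, every elliptic subgroup of $H$ is conjugate into $\langle h\rangle$, which is a contradiction. Hence $(W_j\cdot x_i)\cap(W_k\cdot x_i)=\varnothing$ for $j\ne k$.

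The cyclic-reduction check in Step 2 is the main obstacle. Everything else is bookkeeping identical to Lemma~\ref{free1}.
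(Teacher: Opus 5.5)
Your proposal follows the paper's route, but Step 2 has a genuine gap, and it cannot be closed as stated. The element $w=v^{-1}g^{j-k}u$ \emph{can} be conjugate into $\langle h\rangle$. Take $v=e$ and $u=hg^{k-j}\in W_0$; then $w=g^{j-k}hg^{k-j}$, which is conjugate to $h$. Your cyclic-reduction bookkeeping breaks here. Once the outer syllables of the shorter word $v^{-1}$ are used up, $g^{j-k}$ itself becomes an outer syllable and cancels against an interior $g$-syllable of $u$. The paper's proof of Lemma~\ref{free1}, which the proof of this lemma copies, makes the same unjustified assertion, so reproducing it verbatim inherits the problem.

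Worse, with representatives chosen by an arbitrary enumeration as in your Step~1, the disjointness itself can fail. Nothing in the hypotheses prevents $h$ from fixing a point $y\in X$; the elliptic-subgroup condition is exactly what allows $\langle h\rangle$ to be elliptic. Suppose the enumeration picks $x_i=g^ny$ with $n\neq 0$ as the representative of $H\cdot y$. Take $j-k=n$, $v=e\in W_0$ and $u=hg^{-n}\in W_0$. Then $(g^ju)\cdot x_i=g^jhy=g^jy=g^{k+n}y=(g^kv)\cdot x_i$, so $W_j\cdot x_i\cap W_k\cdot x_i\neq\varnothing$.

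The fix is to use the elliptic hypothesis to \emph{choose} the representatives, not to derive a contradiction about conjugacy classes. For any $x\in X$, the stabilizer $\operatorname{Stab}_H(x)$ is elliptic, so $\operatorname{Stab}_H(x)\le a\langle h\rangle a^{-1}$ for some $a\in H$. Then $\operatorname{Stab}_H(a^{-1}x)=a^{-1}\operatorname{Stab}_H(x)a\le\langle h\rangle$. Take every $x_i$ of this form.

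Now suppose $(g^ju)\cdot x_i=(g^kv)\cdot x_i$ with $j\neq k$. Then $w=v^{-1}g^{j-k}u\in\langle h\rangle$. But $v^{-1}g^{j-k}u$ is a reduced word containing the syllable $g^{j-k}$, which is nontrivial because a loxodromic $g$ has infinite order. Uniqueness of normal forms in $\langle h\rangle*\langle g\rangle$ gives the contradiction.

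Equivalently, $W_0\langle h\rangle=W_0$, so $W_k\operatorname{Stab}_H(x_i)=W_k$ for every $k$. Hence the partition $H=\bigsqcup_j W_j$ descends to the orbit $H\cdot x_i$. With this change to Step~1, and no cyclic reduction at all, the argument goes through.
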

\begin{proof}
The proof is identical to that of Lemma \ref{free1}. The only change is to fix \(h\in Q\) instead of \(h\in F\) and use \(\pn^{X,CB}\) to obtain the loxodromic \(g\) for each \(h\in Q\).
\end{proof}

\begin{proof}[Proof of Proposition \ref{ana}.]The proof is identical to that of Proposition \ref{ana1}. We omit the details.
\end{proof}

When we focus on the Polish group equipped with natural topology and acting on a countable hyperbolic metric space, we will ask the following question.

\begin{question}
    Is there a larger $C^*$-algebra than the action-based \(C^{*}\)-algebra $C^{*}_{X}G$ for which the property \(\pa^{X,\text{CB}}\) ensure the simplicity?
\end{question}

\bibliographystyle{alpha}
\bibliography{sample}

\end{document}